\theoremstyle{plain} 
\newtheorem{theorem}{\indent\sc Theorem}[section]
\newtheorem{lemma}[theorem]{\indent\sc Lemma}
\newtheorem{corollary}[theorem]{\indent\sc Corollary}
\theoremstyle{definition} 
\newtheorem{definition}[theorem]{\indent\bf Definition}
\newtheorem{remark}[theorem]{\indent\bf Remark}
\newcommand{\dist}{{\mathrm{d}}}
\newcommand{\LSC}{{\mathrm{LSC}}}
\newcommand{\Ric}{{\mathrm{Ric}}}
\newcommand{\rr}{{\mathbb{R}}}
\newcommand{\USC}{{\mathrm{USC}}}
\begin{document}

\title{Maximum principle for viscosity solutions on Riemannian manifolds
}
\author{ Shige Peng\thanks{Partially  supported by The National Basic Research
Program of China (973 Program) grant No. 2007CB814900.} and Detang
Zhou\thanks{Partially supported by CNPq and FAPERJ of Brazil.}}
\maketitle
\begin {quote}
{{\bf Abstract. } In this paper we consider viscosity solutions to
second order partial differential equations on Riemannian manifolds.
 We prove maximum principles for solutions to Dirichlet problem on a compact Riemannian manifold with boundary.  Using a different method, we generalize maximum principles of
 Omori and Yau to a viscosity version. We also prove maximum principle for parabolic equations.
 }

{ {\it Key words}: viscosity solution, maximum principle, Riemannian
manifold}
\end{quote}

\section{Introduction}The theory of viscosity solutions on $\rr^{n}$ has been an
important area in analysis since the concept was introduced in the
early 1980's by Michael Crandall and Pierre-Louis Lions. As a
generalization of the classical concept of what is meant by a
``solution'' to a partial differential equation, it has been found
that the viscosity solution is the natural solution concept in many
applications of PDE's, including for example first order equations
arising in optimal control (the Hamilton-Jacobi-Bellman equation),
differential games (the Isaacs equation) or front evolution
problems, as well as second-order equations such as the ones arising
in stochastic optimal control or stochastic differential games(see
\cite{cil} and references therein).  It is a natural question to ask
how to generalize the theory to problems on Riemannian manifolds. We
are also motivated by recent works of the first author (\cite{P1},
\cite{P2}) on G-Brownian motion and related stochastic calculus in
$\rr^n$. In section 3 of \cite{P1}, a $G$-normal distribution is
defined to be  a nonlinear expectation given by the viscosity
solution of the following nonlinear parabolic partial differential
equation:
$$ \frac{\partial u}{\partial t}-G(D^2u)=0, \quad u(0,x)=\phi(x), \quad (t,x)\in[0,+\infty)\times \rr^n.$$
Therefore setting up  $G$-Brownian motion and related stochastic
calculus  on a complete noncompact Riemannian manifold needs  the
existence of viscosity solutions of the following nonlinear
parabolic partial differential equation. So we are led to study the
maximum principles.

 Some
special cases have been discussed in comparison theory for
Riemannian distance function and reduced distance function( see
section 9.4 and 9.5 in \cite{ccg}). Up to now, little is known for
general second order partial differential equations in the
references.  Azagra, Ferrera and Sanz \cite{afs} published a paper
in which among the other results they obtained a Hessian estimate
for distance functions and generalized some of results in \cite{cil}
to compact Riemannian manifold with some curvature conditions.
Recently, Harvey and Lawson\cite{HL} studied the Dirichlet problem
for fully nonlinear second order equations on a compact Riemannian
manifold with boundary via closed subsets of the 2-jet bundle and
estabilished existence and uniqueness theorems.

We will study the maxumum principles for viscosity solutions to
second order partial differential equation of the form
\[F(x, u, Du, D^2u)=0
\]
where $u:M\to \mathbb{R}$ is a function and $M$ is a compact
Riemannian manifold with boundary or a complete Riemannian manifold.
It is straightforward to generalize the comcepts of viscosity
subsolution and supersolution to any Riemannian manifold. As is well
known the classical maximum principle  for Dirichlet problem on the
domains in Euclidean space can be easily generalized  to any compact
Riemannian manifold without restrictions on curvature. If we follow
 the  method  used in \cite{cil} we   see that even for
the hyperbolic space  the proof does not go directly.
The reason is that on $\mathbb{R}^n$, one need to compute the square
of distance function $\frac12|x-y|^2$  and as a function on
$\mathbb{R}^n\times \mathbb{R}^n$, its Hessian is
\begin{equation*}
    \begin{pmatrix}
      I_n & -I_n \\
      -I_n & I_n \\
    \end{pmatrix}.
\end{equation*}
As for the Riemannian manifold, the square of distance function
$\frac12\dist(x,y)^2$ is much more complicated. It may be
non-differentiable at some points and   on  hyperbolic space
$\mathbb{H}^n(-1)$ of constant curvature $-1$, it is smooth and its
Hessian can be written as
\begin{equation*}
    \left(
       \begin{array}{cccc}
         1&&-1&\\
         &I_{n-1}\dist\coth \dist & &-\frac{\dist}{\sinh \dist} I_{n-1}\\
         -1&&1&\\
         &-\frac{\dist}{\sinh \dist}I_{n-1} && I_{n-1}\dist\coth \dist \\
       \end{array}
     \right),
\end{equation*}
where $I_{n-1}=\textrm{diag}(1,1,\cdots, 1)$ is $(n-1)\times (n-1)$
unit matrix. We will use $I$ instead of $I_n$ when the dimension is
obvious.

When $M$ is compact Riemannian manifold with boundary, we consider
the Dirichlet problem.  From the proof in section 3 in \cite{cil} we
can deal with  problems in a sufficiently small neighborhood of the
limit point where the function $\frac12\dist(x,y)^2$ is smooth. But
to prove the comparison theorem we  need to estimate the Hessian of
$\frac12\dist(x,y)^2$. In  section \ref{secComp}, we prove a sharp
Hessian estimate for $\frac12\dist(x,y)^2$ for manifold with
sectional curvature bounded below by a constant and a sharp estimate
of Laplacian type for $\frac12\dist(x,y)^2$ for manifolds with Ricci
curvature bounded below a constant. According to these estimate we
 need to modify the proof in \cite{cil} to obtain the desired
 comparison theorem which does not require a curvature restriction.
 More precisely, we prove

\begin{theorem} (see Theorem \ref{thmDiri})
Let $M$  be a  compact Riemannian manifold with or without  boundary
$\partial M$, $F\in C(\mathcal{F}(M),\rr)$  a proper function
satisfying
\begin{equation}
    \beta(r-s)\le F(x,r,p,X)-F(x,s,p,X) \textrm{ for } r\ge s,
\end{equation}
for some positive constant $\beta$ and the following condition (H):
\begin{quote}
there exists a function $\omega:[0,\infty]\to [0,\infty]$ satisfying $\omega(t)>0$ when $ t>0,$ and
$\omega(0+)=0$ such that
\begin{equation}
F(y,r, \delta\iota(\gamma'(0)),X_{1})-F(x,r,
\delta\iota(\gamma'(l)),X_{2})\le
\omega(\delta\dist(x,y)^2+\dist(x,y))
\end{equation}
 for $ X_1\in \mathcal{S}^2
T_{{x}}^*M$ and $X_2\in \mathcal{S}^2T_{{y}}^*M$,  satisfying
$X_1\ge X_2\circ P_{\gamma}(l)$. Here $\iota$ is the dual map
between the tangent and cotangent bundles and $\delta$ is a positive
constant. Here $\mathcal{S}^2T^*M$ is the bundle of symmetric
covariant tensors over $M$.
\end{quote}

 Let $u_1\in \USC(\bar
{M})$ and $u_2\in \LSC(\bar {M})$ be a subsolution and supersolution
of $F=0$ respectively.  Then $u_1-u_2$ cannot achieve a positive
local maximum at any interior point. In particular if $M$  is a
compact Riemannian manifold with boundary $\partial M$ and if
$u_1\le u_2$ on $\partial M$, then $u_1\le u_2$ on $\bar{ M}$.

\end{theorem}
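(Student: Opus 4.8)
The plan is to argue by contradiction with the doubling-of-variables technique of Crandall--Ishii--Lions, the manifold-specific ingredient being the sharp Hessian estimate for $\tfrac12\dist(x,y)^2$ from Section~\ref{secComp}, which takes over the role that the explicit Hessian of $\tfrac12|x-y|^2$ plays on $\rr^{n}$. Suppose, for contradiction, that $w:=u_1-u_2$ has a positive local maximum $M_0:=w(\hat x)>0$ at an interior point $\hat x$. First I would fix a small geodesic ball $B:=B_\rho(\hat x)$ with $\bar B$ in the interior of $M$, chosen so small that $\dist^2$ is smooth on $\bar B\times\bar B$ (nearby points being joined by a unique minimizing geodesic with no conjugate points) and $w\le M_0$ on $\bar B$; for $\delta,\epsilon>0$ I would then study
\[
M_{\delta,\epsilon}:=\max_{(x,y)\in\bar B\times\bar B}\Bigl(u_1(x)-u_2(y)-\tfrac{\delta}{2}\dist(x,y)^2-\epsilon\,\dist(x,\hat x)^2\Bigr)
\]
with a maximizer $(x_\delta,y_\delta)$, the $-\epsilon\,\dist(x,\hat x)^2$ term serving only to pin the maximizers into the interior of $B$. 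The standard elementary estimates give, for fixed $\epsilon$, that $\delta\,\dist(x_\delta,y_\delta)^2\to0$ and $\dist(x_\delta,y_\delta)\to0$ as $\delta\to\infty$, that every cluster point of $(x_\delta,y_\delta)$ is $(\hat x,\hat x)$, hence that $x_\delta,y_\delta$ are interior to $B$ for $\delta$ large and $u_1(x_\delta)-u_2(y_\delta)\ge M_0>0$.

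Next, for such a large $\delta$, I would let $\gamma\colon[0,l]\to M$ be the minimizing geodesic from $y_\delta$ to $x_\delta$, $l=\dist(x_\delta,y_\delta)$, with $P_\gamma(l)$ the parallel transport along it. Since $x\mapsto\tfrac{\delta}{2}\dist(x,y_\delta)^2+\epsilon\,\dist(x,\hat x)^2$ and $y\mapsto-\tfrac{\delta}{2}\dist(x_\delta,y)^2$ are smooth near $x_\delta$ and $y_\delta$, the first variation of arclength identifies their differentials at the maximizer as $p_1=\delta\iota(\gamma'(l))+O(\epsilon)$ and $p_2=\delta\iota(\gamma'(0))$. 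Applying the Riemannian maximum principle for semijets --- the ``theorem on sums'' of \cite{cil}, which transplants via normal coordinates --- to the doubled function at $(x_\delta,y_\delta)$ produces matrices $X_2\in\mathcal S^2T^*_{x_\delta}M$, $X_1\in\mathcal S^2T^*_{y_\delta}M$ with $(p_1,X_2)\in\bar J^{2,+}u_1(x_\delta)$, $(p_2,X_1)\in\bar J^{2,-}u_2(y_\delta)$, subject to a matrix inequality for $X_1,X_2$ controlled by the Hessian of $\tfrac{\delta}{2}\dist(x,y)^2$ on $\bar B\times\bar B$. Here the Euclidean argument stalls, because this Hessian is \emph{not} $\delta\begin{pmatrix}I&-I\\-I&I\end{pmatrix}$; this is the point at which I would invoke Section~\ref{secComp}: under the lower sectional (resp.\ Ricci) curvature bound, that Hessian exceeds the Euclidean model --- in the pairing along vectors matched by $P_\gamma(l)$ (resp.\ in trace) --- by at most $C\,\delta\,\dist(x_\delta,y_\delta)^2$. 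Testing the matrix inequality on matched vectors then yields $X_1\ge X_2\circ P_\gamma(l)-\eta_\delta\,\mathrm{Id}$ with $\eta_\delta=C\,\delta\,\dist(x_\delta,y_\delta)^2+O(\epsilon)\to0$; after rounding the matrices (using that $\bar J^{2,\pm}$ are monotone in the Hessian slot) so that the hypothesis $X_1\ge X_2\circ P_\gamma(l)$ of condition (H) holds exactly, at the cost of a vanishing error, the pair becomes admissible in (H).

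Finally, I would combine the inequalities. Subsolution and supersolution give $F(x_\delta,u_1(x_\delta),p_1,X_2)\le0\le F(y_\delta,u_2(y_\delta),p_2,X_1)$. From
\[
0\le F(y_\delta,u_2(y_\delta),p_2,X_1)-F(x_\delta,u_1(x_\delta),p_1,X_2),
\]
inserting $\pm F(y_\delta,u_1(x_\delta),p_2,X_1)$, using the $\beta$-monotonicity (legitimate since $u_1(x_\delta)>u_2(y_\delta)$) to bound the first difference from below by $\beta\bigl(u_1(x_\delta)-u_2(y_\delta)\bigr)$ and condition (H) to bound the second from above by $\omega\bigl(\delta\,\dist(x_\delta,y_\delta)^2+\dist(x_\delta,y_\delta)\bigr)$, I obtain
\[
\beta\bigl(u_1(x_\delta)-u_2(y_\delta)\bigr)\le\omega\bigl(\delta\,\dist(x_\delta,y_\delta)^2+\dist(x_\delta,y_\delta)\bigr)+o(1).
\]
Letting $\delta\to\infty$ and then $\epsilon\to0$, and using $\delta\,\dist(x_\delta,y_\delta)^2\to0$, $\dist(x_\delta,y_\delta)\to0$, $\omega(0+)=0$, and $u_1(x_\delta)-u_2(y_\delta)\ge M_0$, I reach $\beta M_0\le0$, contradicting $M_0>0$. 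Thus $u_1-u_2$ has no positive interior local maximum; and if $M$ has boundary, $u_1\le u_2$ on $\partial M$ and $\sup_{\bar M}(u_1-u_2)>0$, then that supremum is attained (upper semicontinuity plus compactness) at a necessarily interior point, giving a positive interior local maximum --- impossible --- so $u_1\le u_2$ on $\bar M$.

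The step I expect to be the main obstacle is the matrix comparison in the second paragraph: formulating the Riemannian theorem on sums and, above all, controlling the curvature contribution to the Hessian of $\tfrac12\dist(x,y)^2$ so that the discrepancy from the comparison $X_1\ge X_2\circ P_\gamma(l)$ is genuinely $O(\delta\,\dist^2)$ and washes out in the limit. This is precisely what the sharp Hessian estimate (and its Laplacian-type companion under a Ricci bound) of Section~\ref{secComp} is designed to deliver, and it is the only point at which the proof of \cite{cil} must be essentially modified.
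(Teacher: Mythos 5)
Your argument takes the same core route as the paper's proof of Theorem~\ref{thmDiri}: doubling of variables with the $\tfrac{\delta}{2}\dist(x,y)^2$ penalty, the Riemannian theorem on sums, and the sharp Hessian estimate of Theorem~\ref{thmhess} as the manifold-specific input, followed by the same endgame of inserting intermediate $F$-terms, using $\beta$-monotonicity and condition~(H), and letting the penalty parameter blow up. The one real structural difference is your auxiliary localization term $-\epsilon\,\dist(x,\hat x)^2$. The paper instead maximizes $u_1(x)-u_2(y)-\tfrac{\alpha}{2}\dist(x,y)^2$ over all of $M\times M$ and relies on compactness to furnish maximizers, with the boundary condition $u_1\le u_2$ on $\partial M$ forcing the limit point $x_0$ into the interior; no extra penalty appears, and the proof as written really addresses the global statement (and its boundaryless counterpart) rather than the ``no positive interior local maximum'' wording head-on, which your localized variant handles more directly. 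The cost of your variant is that the penalty perturbs the gradient slot: $p_1$ is no longer \emph{exactly} of the form $\delta l\,\iota(\gamma'(\cdot))$ that condition~(H) demands, only $\delta l\,\iota(\gamma'(\cdot))+O(\epsilon)$, so before invoking~(H) you need an explicit uniform-continuity-of-$F$ step to peel off the $O(\epsilon)$ term (and similarly an $O(\epsilon)$ contribution to the Hessian bound from the penalty's second derivatives); you gesture at this with ``washes out as $\epsilon\to0$'' but should state it. Likewise, where you propose to ``round the matrices'' using monotonicity of $\bar J^{2,\pm}$, the paper instead feeds the shifted matrix $X_{2\alpha}+4\alpha l_\alpha^2\kappa^2\,\mathrm{Id}$ into~(H), where the comparison $X_1\le X_2\circ P_\gamma(l)$ then holds exactly, and absorbs the resulting discrepancy $F(\ldots,X_{2\alpha}+4\alpha l_\alpha^2\kappa^2\,\mathrm{Id})-F(\ldots,X_{2\alpha})$ by continuity of $F$ in the Hessian argument. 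Both devices close the gap; the paper's is a touch cleaner because the specific choice $\varepsilon=\bigl(\alpha(\cosh\kappa L(D)+1)\bigr)^{-1}$ in the theorem on sums produces the explicit error term $4\alpha l_\alpha^2\kappa^2$ rather than an abstract $\eta_\delta$.
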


When $M$ is a complete Riemannian manifold without boundary. We have not found
a similar result in the Euclidean case. The new obstruction is that
 we may not have a limit point when $M$ is noncompact. We first
generalize the maximum principle of Omori and Yau (see \cite{O} and
\cite{Y}) to a viscosity-type. As is known, Yau's maximum principle
( \cite{Y}) is stated as the following:
\begin{theorem}Let $M$ be a complete Riemannian manifold with Ricci
curvature bounded below by a constant. Let  $u: M\to \rr$ is a $C^2$
function
 with $\inf u> -\infty$, then for any $\varepsilon>0$, there
 exists a point $x_{\varepsilon}\in M$ such that
 \begin{equation*}
    \begin{split}
         & u(x_\varepsilon)<\inf u+\varepsilon, \\
         & |\nabla u|(x_\varepsilon)<\varepsilon,\\
         & \Delta u(x_\varepsilon)>-\varepsilon.
     \end{split}
 \end{equation*}
\end{theorem}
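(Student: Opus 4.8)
The statement is the classical Omori--Yau maximum principle, and the plan is to prove it by the Yau--Cheng perturbation argument, whose only genuine difficulty is the non-smoothness of the distance function along the cut locus. First I would dispose of the trivial case: if $M$ is compact --- which is automatic when the given Ricci lower bound is positive, by Bonnet--Myers --- then $u$ attains an interior minimum, where $\nabla u=0$ and $\Delta u\ge 0$, and we are done; so assume $M$ is noncompact and, weakening the bound if necessary, write the hypothesis as $\Ric\ge-(n-1)K^2$ with $K\ge 0$, $n=\dim M$. Subtracting a constant, assume $\inf_M u=0$. Fix $\varepsilon>0$, choose a base point $o$ with $u(o)<\varepsilon$, and set $r(x)=\dist(o,x)$, $\varphi(x)=\sqrt{1+r(x)^2}$. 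Since $\varphi$ is proper and $u$ is bounded below, for each $\eta>0$ the function $\Phi_\eta:=u+\eta\varphi$ attains its infimum at some $x_\eta\in M$, and $u(x_\eta)\le\Phi_\eta(x_\eta)-\eta\le\Phi_\eta(o)-\eta=u(o)<\varepsilon$.

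Next I would extract the first- and second-order data at $x_\eta$. Assume first that $x_\eta$ is not a cut point of $o$ (the case $x_\eta=o$ being harmless, since $r^2$ and hence $\varphi$ are smooth near $o$). Then $\varphi$ is $C^\infty$ near $x_\eta$, $\Phi_\eta$ has a local minimum there, so $\nabla\Phi_\eta(x_\eta)=0$ and $\Delta\Phi_\eta(x_\eta)\ge 0$. Using $|\nabla r|=1$, the first gives $|\nabla u|(x_\eta)=\eta\,r(x_\eta)/\sqrt{1+r(x_\eta)^2}<\eta$. For the second, $\Delta\varphi=(1+r^2)^{-3/2}+\tfrac{r}{\sqrt{1+r^2}}\Delta r$, and the Laplacian comparison theorem gives $\Delta r\le(n-1)K\coth(Kr)$ (to be read as $(n-1)/r$ when $K=0$); since $s\mapsto s\coth s$ is increasing with limit $1$ at $0^+$, one checks $\tfrac{r}{\sqrt{1+r^2}}(n-1)K\coth(Kr)\le(n-1)K\coth K$ for all $r>0$, so $\Delta\varphi\le C$ for a constant $C=C(n,K)$ and therefore $\Delta u(x_\eta)\ge-\eta\,\Delta\varphi(x_\eta)\ge-\eta C$.

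The remaining case $x_\eta\in\mathrm{Cut}(o)$ is the main obstacle, handled by Calabi's trick. Let $\sigma$ be a minimizing unit-speed geodesic from $o$ to $x_\eta$ and, for small $\delta>0$, set $r_\delta(x)=\delta+\dist(\sigma(\delta),x)$. Then $r_\delta\ge r$ everywhere with equality at $x_\eta$, while $x_\eta$ is not a cut point of $\sigma(\delta)$, so $\varphi_\delta:=\sqrt{1+r_\delta^2}$ is smooth near $x_\eta$ and $u+\eta\varphi_\delta\ge\Phi_\eta$ near $x_\eta$ with equality at $x_\eta$; hence $u+\eta\varphi_\delta$ also attains a local minimum at $x_\eta$. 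Repeating the computation above with $r_\delta$ in place of $r$, using $\Delta r_\delta(x_\eta)\le(n-1)K\coth\!\big(K(r(x_\eta)-\delta)\big)$ and letting $\delta\to0$, yields $|\nabla u|(x_\eta)<\eta$ and $\Delta u(x_\eta)\ge-\eta C$ with the same $C$. Finally, choosing $\eta<\min(\varepsilon,\varepsilon/C)$, the point $x_\varepsilon:=x_\eta$ satisfies $u(x_\varepsilon)<\inf u+\varepsilon$, $|\nabla u|(x_\varepsilon)<\varepsilon$ and $\Delta u(x_\varepsilon)>-\varepsilon$, as claimed.

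Two structural remarks. The difficulty just circumvented --- bounding $\Delta\varphi$ above at a point where $\dist(o,\cdot)$ may fail to be smooth --- is exactly the phenomenon that forces the sharp Hessian and Laplacian estimates for $\tfrac12\dist(x,y)^2$ carried out in Section~\ref{secComp}; and the secondary difficulty, that a function bounded below on a noncompact manifold need not attain its infimum, is dissolved by the proper perturbation $\eta\varphi$, which is the one device that the later viscosity version will also have to reproduce.
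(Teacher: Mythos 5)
Your proof is correct, but it follows a genuinely different route from the paper's. You give the direct Cheng--Yau/Calabi argument: force attainment of the infimum with the proper perturbation $\eta\sqrt{1+r^2}$ based at a near-optimal point $o$, read off first and second derivatives of the $C^2$ function at the resulting minimum, control $\Delta r$ by Laplacian comparison under the Ricci bound, and use Calabi's trick with the upper barrier $r_\delta=\delta+\dist(\sigma(\delta),\cdot)$ to handle the cut locus. The paper instead proves the statement as the corollary of Theorem~\ref{thmYau}, taking $u\equiv\inf f$ and $v=f$; Theorem~\ref{thmYau} is a viscosity version for merely semicontinuous $u,v$, and its proof uses the doubling-of-variables functional
\[
u(x)-v(y)-\tfrac{\alpha}{2}\dist(x,y)^2-\tfrac{\lambda_\alpha}{2}\bigl[\dist(\hat x_\alpha,x)^2+\dist(\hat x_\alpha,y)^2\bigr]
\]
with a \emph{small, localized} quadratic penalty centered at an almost-optimizer $\hat x_\alpha$, together with the Crandall--Ishii theorem on sums and the Laplacian comparison of Theorem~\ref{thmlaplace}. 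The difference matters: your proper penalty is global in $M$ and so forces a confrontation with the cut locus (hence Calabi's trick), while the paper's $\lambda_\alpha\dist(\hat x_\alpha,\cdot)^2$ penalty localizes the maximizers near $\hat x_\alpha$ and stays inside a convex ball where the distance function is smooth, avoiding cut loci altogether; in return your argument is shorter but is tied to $C^2$ regularity, because you read derivatives directly at the minimum, which is not available in the viscosity setting. One small inaccuracy in your closing remark: the paper's viscosity proof does \emph{not} reproduce the proper perturbation device; it replaces it by the near-optimizer penalty just described. Also, your monotonicity justification of $\frac{r}{\sqrt{1+r^2}}(n-1)K\coth(Kr)\le(n-1)K\coth K$ needs the two separate cases $r\le1$ (use $s\coth s$ increasing) and $r\ge1$ (use $\coth$ decreasing and $\tfrac{r}{\sqrt{1+r^2}}\le1$), rather than the single reason you cite; the bound itself is correct.
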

Its proof uses the gradient estimate of the distance function to a
fixed point (see \cite{CY}) and  for a viscosity solution  the function may be non-differentiable and we cannot use the
gradient estimate. Besides overcoming the difficulties appeared in
compact cases, we used a new penalty function to the corresponding
maximum principles of Omori and Yau for viscosity solutions. Even
for $C^2$ function, we provide a new proof of Omori and Yau's
theorems. We would like remark that such a viscosity version is also
new for $\mathbb{R}^n$. We prove the following (see section
\ref{secPri} for the definitions  of $\bar{J} ^{2,+}u(x) $ and
$\bar{J} ^{2,-}u(x) $.)
\begin{theorem}(see Theorem \ref{thmOmori})
Let $M$ be a complete Riemannian manifold with sectional curvature
bounded below by a constant $-\kappa^2$. Let $u
\in{\mathrm{USC}}(M)$, $v\in{\mathrm{LSC}}(M)$ be two functions
satisfying
 \begin{equation}\label{}
    \mu_0:=\sup_{x\in M}[u(x)-v(x)]< +\infty.
 \end{equation}
 Assume that $u$ and $v$ are bounded from above and below respectively and there exists a  function $\omega:[0,\infty]\to [0,\infty]$ satisfying $\omega(t)>0$ when $ t>0,$ and
$\omega(0+)=0$  such that
\begin{equation}\label{}
    u(x)-u(y)\le \omega(\dist(x,y)).
\end{equation}
Then for each $\varepsilon>0$, there exist $x_{\varepsilon},
y_{\varepsilon}\in M$, such that
$(p_{\varepsilon},X_{\varepsilon})\in \bar{J}%
^{2,+}u(x_{\varepsilon}),\  \ (q_{\varepsilon},Y_{\varepsilon})\in
\bar{J} ^{2,-}v(y_{\varepsilon}),$ such that
\[
u(x_{\varepsilon})-v(y_{\varepsilon})\geq \mu_{0}-\varepsilon,\
\]
and such that
\[
\dist(x_{\varepsilon},y_{\varepsilon})<\varepsilon,\  \  \
|p_{\varepsilon }-q_{\varepsilon}\circ P_{\gamma}(l)|<\varepsilon,\
\ X_{\varepsilon}\leq Y_{\varepsilon }\circ
P_{\gamma}(l)+\varepsilon  P_{\gamma}(l),\
\]
where $l=\dist(x_{\varepsilon},y_{\varepsilon})$ and $
P_{\gamma}(l)$ is the parallel transport along the shortest geodesic
connecting $x_{\varepsilon}$ and $y_{\varepsilon}$.

\end{theorem}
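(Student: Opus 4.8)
The plan is to adapt the classical "doubling of variables" technique from the theory of viscosity solutions, but with a penalty function tailored to the noncompact Riemannian setting so that a quasi-maximum can actually be captured. Fix a reference point $o \in M$ and let $r(x) = \dist(x,o)$. For parameters $\alpha > 0$ (large) and $\eta > 0$ (small), I would consider the function
\[
\Phi_{\alpha,\eta}(x,y) = u(x) - v(y) - \frac{\alpha}{2}\dist(x,y)^2 - \eta\,\psi(r(x)) - \eta\,\psi(r(y)),
\]
on $M \times M$, where $\psi$ is a smooth, nondecreasing, convex-at-infinity comparison function (for instance $\psi(t) = \log(1+t^2)$ or $\sqrt{1+t^2}$) chosen so that $\psi \to \infty$ and so that, using the sectional curvature lower bound $-\kappa^2$ and the Hessian/Laplacian comparison estimates proved in Section \ref{secComp}, the second-order terms $D^2(\eta\psi(r))$ are controlled by a quantity that tends to $0$ as $\eta \to 0$, uniformly away from the cut locus. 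Since $u$ is bounded above, $v$ bounded below, and $\psi$ is coercive, $\Phi_{\alpha,\eta}$ attains a maximum at some $(x_{\alpha,\eta}, y_{\alpha,\eta}) \in M \times M$; the $\eta$-penalty is exactly what replaces compactness here.

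The next step is the standard two-part estimate. First, comparing the value of $\Phi_{\alpha,\eta}$ at its maximizer with its value on the diagonal at a near-optimal point for $\mu_0$ gives $\frac{\alpha}{2}\dist(x_{\alpha,\eta}, y_{\alpha,\eta})^2 \le u(x_{\alpha,\eta}) - v(y_{\alpha,\eta}) - \mu_0 + (\text{small in }\eta) + (\text{bounded})$, and then the modulus-of-continuity hypothesis $u(x)-u(y) \le \omega(\dist(x,y))$ is used (as in the classical argument) to show that $\alpha\,\dist(x_{\alpha,\eta},y_{\alpha,\eta})^2 \to 0$ as $\alpha \to \infty$, hence $\dist(x_{\alpha,\eta},y_{\alpha,\eta}) \to 0$ and $u(x_{\alpha,\eta}) - v(y_{\alpha,\eta}) \to \mu_0$. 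So for $\alpha$ large and $\eta$ small we get the first three assertions: $\dist(x_\varepsilon,y_\varepsilon) < \varepsilon$ and $u(x_\varepsilon) - v(y_\varepsilon) \ge \mu_0 - \varepsilon$. In particular, for $\alpha$ large the maximizer is not in the cut locus of either point, so $\dist(\cdot,\cdot)^2$ is smooth near $(x_{\alpha,\eta}, y_{\alpha,\eta})$ and we may apply the Riemannian maximum principle / Theorem on sums of jets (the manifold analogue of the Crandall–Ishii lemma, using parallel transport $P_\gamma(l)$ along the minimizing geodesic $\gamma$ to identify $T^*_xM$ with $T^*_yM$) to the functions $u$ and $v$ at this interior maximum of $\Phi_{\alpha,\eta}$.

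This produces $(p_\varepsilon, X_\varepsilon) \in \bar J^{2,+}u(x_\varepsilon)$ and $(q_\varepsilon, Y_\varepsilon) \in \bar J^{2,-}v(y_\varepsilon)$, where $p_\varepsilon$ and $q_\varepsilon$ are (up to the $\eta\,d\psi$ corrections) the differentials of $\frac{\alpha}{2}\dist(x,y)^2$ in each variable, so $p_\varepsilon - q_\varepsilon \circ P_\gamma(l)$ is exactly the $\eta$-gradient correction $\eta(d\psi(r)|_{x} + \text{transported }d\psi(r)|_y)$, which has norm $O(\eta)$; choosing $\eta$ small gives $|p_\varepsilon - q_\varepsilon \circ P_\gamma(l)| < \varepsilon$. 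For the Hessian bound, the matrix inequality coming from the second-order maximum principle reads $X_\varepsilon - Y_\varepsilon \circ P_\gamma(l) \le \eta\big(D^2\psi(r)|_x + D^2\psi(r)|_y \circ P_\gamma(l)\big)$ plus the $\dist^2$ term whose relevant block, by the Hessian estimate of Section \ref{secComp}, is $\le$ (a curvature-dependent multiple of $\alpha\,\dist(x_\varepsilon,y_\varepsilon)$ times $P_\gamma(l)$) — which goes to $0$ along our sequence. Bounding the $\eta\,D^2\psi$ term via the sectional-curvature comparison and choosing $\eta$ small then yields $X_\varepsilon \le Y_\varepsilon \circ P_\gamma(l) + \varepsilon P_\gamma(l)$, completing the proof. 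The main obstacle I anticipate is the simultaneous bookkeeping of the two limiting regimes: I need $\alpha \to \infty$ first to kill the $\dist^2$-Hessian block and force $\dist(x_\varepsilon,y_\varepsilon) \to 0$, yet the penalty Hessian $\eta D^2\psi(r)$ must stay controlled uniformly in $\alpha$, which is where the precise choice of $\psi$ and the curvature-comparison Laplacian/Hessian estimates from Section \ref{secComp} do the essential work — one must verify the order of quantifiers ($\varepsilon$ given; then choose $\eta$ small; then $\alpha$ large depending on $\eta$) is actually consistent.
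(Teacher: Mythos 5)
Your route (a \emph{fixed} reference point $o$ with a slowly growing coercive penalty $\eta\,\psi(r(x))$, $r=\dist(o,\cdot)$) is genuinely different from the paper's, which instead penalizes by $\frac{\lambda_\alpha}{2}\dist(\hat{x}_\alpha,\cdot)^2$ where $\hat{x}_\alpha$ is a \emph{moving} base point chosen so that $u(\hat{x}_\alpha)-v(\hat{x}_\alpha)\ge \mu_0-\omega(\sqrt{\mu_0/\alpha})$, with coefficient $\lambda_\alpha=-1/\log\omega(\sqrt{\mu_0/\alpha})$. The paper's tuning makes $\lambda_\alpha\to 0$ (so the penalty's contribution to the jets vanishes) while simultaneously $\omega(\sqrt{\mu_0/\alpha})/\lambda_\alpha\to 0$ (so $\dist(\hat{x}_\alpha,x_\alpha),\ \dist(\hat{x}_\alpha,y_\alpha)\to 0$), giving a \emph{single} limit $\alpha\to\infty$ rather than the $(\eta,\alpha)$ double limit you must manage. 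More importantly, because $\hat{x}_\alpha$ converges to the maximizer, the penalty $\dist(\hat{x}_\alpha,\cdot)^2$ is automatically smooth near $(x_\alpha,y_\alpha)$.

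There is a concrete gap in your proposal at exactly that last point: with a fixed $o$, the maximizers $(x_{\alpha,\eta},y_{\alpha,\eta})$ may lie on the cut locus of $o$, where $r=\dist(o,\cdot)$ is merely Lipschitz, not $C^2$. Then the test function $\varphi=\frac{\alpha}{2}\dist(x,y)^2+\eta\psi(r(x))+\eta\psi(r(y))$ is not $C^2$ at the maximizer, and the manifold Crandall--Ishii theorem-on-sums you invoke requires $\varphi\in C^2$. You flag this (``uniformly away from the cut locus'') but offer no resolution. It \emph{can} be patched by Calabi's trick --- replace $r$ by the smooth upper barrier $\dist(o',\cdot)+\dist(o,o')$ with $o'$ displaced slightly along the minimizing geodesic toward the maximizer; since $\psi$ is increasing this enlarges the penalty while keeping its value at the maximizer, so the maximizer is preserved --- but that is an additional argument you must supply, and the paper's moving base point renders it unnecessary. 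The quantifier concern you raise at the end, by contrast, does resolve: given $\varepsilon$, fix a near-optimal $z$ for $u-v$; choose $\eta$ small depending on $z$ (so that $2\eta\psi(r(z))$, $\eta\,\mathrm{Lip}(\psi)$, and $\eta\sup\lVert D^2(\psi\circ r)\rVert$ are all below $\varepsilon/4$); then take $\alpha$ large depending on $\eta$; the maximizer inequality $\frac{\alpha}{2}l^2\le \omega(l)+\eta\,\mathrm{Lip}(\psi)\,l$ then forces $l\to 0$ and $\alpha l^2\to 0$.
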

and
\begin{theorem}(see Theorem \ref{thmYau})
Let $M$ be a complete Riemannian manifold with Ricci curvature
bounded below by a constant $-(n-1)\kappa^2$. Let $u
\in{\mathrm{USC}}(M)$, $v\in{\mathrm{LSC}}(M)$ be two functions
satisfying
 \begin{equation}\label{}
    \mu_0:=\sup_{x\in M}[u(x)-v(x)]< +\infty.
 \end{equation}
 Assume that $u$ and $v$ are bounded from above and below respectively and there exists a  function $\omega:[0,\infty]\to [0,\infty]$ satisfying $\omega(t)>0$ when $ t>0,$ and
$\omega(0+)=0$ such that
\begin{equation}\label{}
    u(x)-u(y)\le \omega(\dist(x,y)).
\end{equation}
Then for each $\varepsilon>0$, there exist $x_{\varepsilon},
y_{\varepsilon}\in M$,
$(p_{\varepsilon},X_{\varepsilon})\in \bar{J}%
^{2,+}u(x_{\varepsilon}),\ $ and $ \
(q_{\varepsilon},Y_{\varepsilon})\in \bar{J}
^{2,-}v(y_{\varepsilon}),$ such that
\[
u(x_{\varepsilon})-v(y_{\varepsilon})\geq \mu_{0}-\varepsilon,\
\]
and such that
\[
\dist(x_{\varepsilon},y_{\varepsilon})<\varepsilon,\  \  \
|p_{\varepsilon }-q_{\varepsilon}\circ P_{\gamma}(l)|<\varepsilon,\
\mathrm{tr} X_{\varepsilon}\leq \mathrm{tr} Y_{\varepsilon
}+\varepsilon,\
\]
where $l=\dist(x_{\varepsilon},y_{\varepsilon})$ and $
P_{\gamma}(l)$ is the parallel transport along the shortest geodesic
connecting $x_{\varepsilon}$ and $y_{\varepsilon}$.

\end{theorem}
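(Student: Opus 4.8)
The plan is to prove the Omori--Yau-type statement (the sectional-curvature version, Theorem \ref{thmOmori}) by a penalization argument, and then obtain the Ricci version (Theorem \ref{thmYau}) by a similar scheme using only the trace, where the comparison estimate for $\dist(x,y)^2$ of Laplacian type — proved in section \ref{secComp} — replaces the full Hessian bound. For the sectional-curvature case, fix $\varepsilon>0$ and a reference point $o\in M$. The difficulty that $M$ is noncompact is handled by introducing a penalty term that grows with distance to $o$: one considers, for small parameters $\alpha>0$ and $\eta>0$,
\[
\Phi_{\alpha,\eta}(x,y)=u(x)-v(y)-\frac{\alpha}{2}\dist(x,y)^2-\eta\,\psi(x)-\eta\,\psi(y),
\]
where $\psi$ is a smooth exhaustion-type function comparable to $\dist(\cdot,o)$ (built so that $\psi$, $|\nabla\psi|$ and the relevant second-order quantity of $\psi$ are controlled; this is the ``new penalty function'' the introduction alludes to). Since $u$ is bounded above, $v$ bounded below, and the penalty forces $\Phi_{\alpha,\eta}\to-\infty$ as $x$ or $y$ leaves a compact set, $\Phi_{\alpha,\eta}$ attains its supremum at some interior point $(x_{\alpha,\eta},y_{\alpha,\eta})$.

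Next I would extract the usual consequences of maximality. Because $u(x)-u(y)\le\omega(\dist(x,y))$, a standard argument shows $\alpha\,\dist(x_{\alpha,\eta},y_{\alpha,\eta})^2\to0$, hence $\dist(x_{\alpha,\eta},y_{\alpha,\eta})<\varepsilon$ once $\alpha$ is large; and by comparing $\sup\Phi_{\alpha,\eta}$ with $\mu_0$ one gets $u(x_{\alpha,\eta})-v(y_{\alpha,\eta})\ge\mu_0-\varepsilon$ for $\eta$ small and $\alpha$ large. Near the maximizing pair the function $\frac12\dist(x,y)^2$ is smooth (the pair is close together, away from the cut locus), so I can apply the Riemannian version of the Theorem on Sums / Jensen--Ishii lemma: there exist $X\in\mathcal S^2T^*_{x}M$, $Y\in\mathcal S^2T^*_{y}M$ with
\[
(\,p,\;X+\eta\,\nabla^2\psi(x)\,)\in\bar J^{2,+}u(x),\qquad
(\,q,\;-Y-\eta\,\nabla^2\psi(y)\,)\in\bar J^{2,-}v(y),
\]
where $p=\alpha\,\delta\iota(\gamma'(0))$, $q=-\alpha\,\delta\iota(\gamma'(l))$ type gradients of the distance penalty, $|p-q\circ P_\gamma(l)|$ is controlled by $\eta|\nabla\psi|$, and $X,-Y$ together with $Y\circ P_\gamma(l)$ are pinched between the Hessian of $\frac{\alpha}{2}\dist^2$ and its scaled square. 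It is exactly here that the sharp Hessian estimate of section \ref{secComp} for $\frac12\dist(x,y)^2$ on manifolds with sectional curvature $\ge-\kappa^2$ enters: it bounds the ``cross'' block so that, after absorbing the $\eta\nabla^2\psi$ corrections (which are $O(\eta(1+\sqrt\alpha\,l))$ and hence negligible in the right order of limits), one concludes $X\le Y\circ P_\gamma(l)+\varepsilon\,P_\gamma(l)$. Sending $\eta\to0$ with $\alpha$ chosen appropriately large and renaming $(x_{\alpha,\eta},y_{\alpha,\eta})$ as $(x_\varepsilon,y_\varepsilon)$ gives the claim.

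For Theorem \ref{thmYau}, I would run the identical penalization, but now there is no full two-sided Hessian pinch available — instead the Ricci lower bound gives, via section \ref{secComp}, only a one-sided estimate on $\operatorname{tr}$ of the Hessian of $\frac12\dist(x,y)^2$ (a Laplacian-comparison statement). Applying the Theorem on Sums and taking traces of the matrix inequality, together with $\operatorname{tr}\nabla^2\psi$ being controlled by the Laplacian comparison for $\psi$, yields $\operatorname{tr}X_\varepsilon\le\operatorname{tr}Y_\varepsilon+\varepsilon$ after the same limiting procedure; the gradient and distance conclusions are obtained verbatim as before. The main obstacle, and the step deserving the most care, is the combined limiting argument: one must choose the order and rate of $\alpha\to\infty$ and $\eta\to0$ so that (i) the maximizer stays in a region where $\dist(x,y)^2$ is smooth, (ii) the penalty-induced gradient and Hessian/trace corrections coming from $\eta\psi$ vanish in the limit even though $\nabla^2\psi$ is only controlled in terms of $\dist(\cdot,o)$ which a priori could be large at the maximizer, and (iii) the curvature-dependent Hessian bound for $\dist^2$ does not blow up — which is guaranteed precisely because $\dist(x_{\alpha,\eta},y_{\alpha,\eta})\to0$. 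Verifying (ii) is where the specific construction of $\psi$ (with $|\nabla\psi|$ bounded and $\nabla^2\psi$, or $\Delta\psi$, growing no faster than linearly) does the real work.
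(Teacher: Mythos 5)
Your overall framework (doubling-of-variables penalization, Riemannian Theorem on Sums, then the Laplacian-type comparison for $\tfrac12\dist(x,y)^2$ to extract the trace inequality) matches the paper's, and your observation that the Ricci case is obtained from the sectional-curvature case by taking traces in ``Part 2'' is exactly what the paper does. However, the penalty you introduce is \emph{not} the one the paper uses, and the place where you admit uncertainty is precisely where your scheme has a real gap that the paper's scheme avoids by design.

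You penalize with $\eta\psi(x)+\eta\psi(y)$ for a fixed reference point $o$ and a global ``exhaustion-type'' function $\psi$ comparable to $\dist(\cdot,o)$, and you then need $|\nabla\psi|$ and $\nabla^2\psi$ (or at least $\Delta\psi$, plus enough to control the $\delta A^2$ term in Ishii's lemma) to be uniformly bounded, so that the $\eta$-corrections vanish as $\eta\to 0$ \emph{even if the maximizer drifts to infinity}. Under only a Ricci lower bound — the hypothesis of Theorem~\ref{thmYau} — it is not clear how to build such a $\psi$ with two-sided Hessian control; Laplacian comparison only controls $\Delta\psi$, not $(\nabla^2\psi)^2$, and without a sectional bound you cannot invoke the usual Cheeger–Gromoll/Schoen–Yau Hessian estimate for $\dist(\cdot,o)$. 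You flag this (``Verifying (ii) is where the specific construction of $\psi$\ldots does the real work'') but do not resolve it, and there is no a priori reason the maximizer of $\Phi_{\alpha,\eta}$ stays in a fixed compact set as $\eta\to 0$, since $u-v$ may approach its supremum only along a sequence escaping to infinity.

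The paper's ``new penalty function'' is different and neutralizes exactly this difficulty. For each $\alpha$ it picks an \emph{adaptive} center $\hat{x}_\alpha$ with $u(\hat x_\alpha)-v(\hat x_\alpha)\geq\mu_0-\omega(\sqrt{\mu_0/\alpha})$, sets $\lambda_\alpha=-1/\ln\omega(\sqrt{\mu_0/\alpha})$, and penalizes by $\tfrac{\lambda_\alpha}{2}\bigl[\dist(\hat x_\alpha,x)^2+\dist(\hat x_\alpha,y)^2\bigr]$. The coupling of $\lambda_\alpha$ to $\alpha$ through $\omega$ is the point: one gets both $\lambda_\alpha\to 0$ and, crucially,
\[
\dist(\hat x_\alpha,x_\alpha)^2+\dist(\hat x_\alpha,y_\alpha)^2\leq \frac{4\,\omega(\sqrt{\mu_0/\alpha})}{\lambda_\alpha}=-4\,\omega(\sqrt{\mu_0/\alpha})\ln\omega(\sqrt{\mu_0/\alpha})\longrightarrow 0,
\]
so the penalized maximizer is \emph{forced into a shrinking convex ball around $\hat x_\alpha$}. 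All Hessian/Laplacian comparisons for the penalty are then applied only locally, where $\dist(\hat x_\alpha,\cdot)^2$ is smooth and its second derivatives are controlled by the classical comparison; no global $\psi$ and no noncompactness escape are ever in play. Put differently, your $\psi$-penalty buys compactness at the cost of global second-order control of $\psi$, whereas the paper's adaptive quadratic penalty buys compactness and localization simultaneously. If you want to repair your route under the Ricci-only hypothesis, you would have to construct a $\psi$ with $|\nabla\psi|$ bounded and a one-sided Hessian bound usable in the Theorem on Sums with a suitable choice of $\delta$; switching to the paper's adaptive penalty is the cleaner fix.
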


One can see that if we take $u$ as a constant function in above
theorems and $v$ is a $C^2$ function we can obtain the estimates
about Hessian and Laplacian respectively. Therefore the above
theorems generalizes Omori and Yau's maximum principles. It is well
known that Yau's theorem has been applied to many geometrical
problems and many functions  in geometry problems are naturally
non-differentiable.

We can follow the text of Users' Guide by Crandall, Ishii and
Lions\cite{cil} to obtain the corresponding results about existence,
uniqueness of partial differential equations. We hope to apply the
results to study more equations as well as  some geometrical
problems in \cite{HL}.

 The rest of this paper is organized as  follows. In section
 \ref{secPri} we give some
 basic notations and definitions for viscosity solutions . In section \ref{secComp}, we prove the comparison
 theorems for distance function. The sections 4 and 5 deal with
 maximum principles for viscosity solutions on compact and complete
 Riemannian manifolds respectively. Finally  we consider the
 parabolic equations in section \ref{secPara}.

{\bf Acknowledgements.}  We wish to  thank Xuehong Zhu for giving us
helpful comments and suggestions. After we posted the first version
of this paper on arXiv in 2008,  Professor H.B. Lawson send us their
preprint \cite{HL} which contains results related to our results
when $M$ has a boundary.  We would like thank him for sending us the
paper.

\section{Preliminaries} \label{secPri}
Let $M$ denote a Riemannian manifold. $TM$ and $T^{*}M$ are tangent
and cotangent bundles respectively. $\mathcal{S}^2T^*M$ is the
bundle of symmetric covariant tensors over $M$. Denote by
$\mathcal{F}(M)$ the bundle product  of $M\times \mathbb{R}$,
$T^{*}M$ and $\mathcal{S}^2 T^{*}M$.

Given a function $f\in C^2(M,\rr)$, the Hessian of $f$ is defined as
\begin{equation*}
    D^2f(X,Y)=\left\langle\nabla_X\nabla
    f,Y\right\rangle=X(Yf)-(\nabla_XY)f
\end{equation*}
where $X,Y$ are vector fields on $M$ and $\nabla$ is the Riemannian
connection on $M$, hence $D^2f\in \mathcal{S}^2T^*M $.  In this
paper, we abuse the notations to write $A=D^2f$ for a matrix $A$
which means $\langle AX, Y\rangle=D^2f(X, Y)$ for all $X,Y\in TM$.
For a differentiable map $\phi$ between two Riemannian manifolds $N$
and $M$ and a function $f\in C^2(M,\rr)$, we have
\begin{equation}\label{eq0.5}
    D^2f\circ\phi(\xi,\eta)=\xi(\eta(f\circ\phi))-(\nabla^N_\xi\eta)(f\circ
    \phi),
\end{equation}
where $\xi,\eta$ are vector fields on $N$ and $\nabla^N$ is the
Riemannian connection on $N$. As a special case, for any fixed point
${x_0}\in M$, we take $N$ as $T_{x_0}M$ and $\phi$ as the
exponential map $\exp: T_{x_0}M\to M$, then in this normal
coordinates the (\ref{eq0.5}) implies, at point $O$,
\begin{equation}\label{eq0.6}
    D^2f\circ\phi(\xi,\eta)=\xi(\eta(f\circ\phi)).
\end{equation}
Besides this, it is well known that the differential of the
exponential map $d\exp$ at $O$ gives an isomorphism between
$T_0(T_{x_0}M)$ and $T_{x_0}M$.

The viscosity solution theory applies to certain partial
differential equations of the form $F(x, u, Du, D^2u)=0$ where
$F:\mathcal{F}(M)\to \mathbb{R}$ is a continuous function. As  in
\cite{cil}, we require $F$ to satisfy some fundamental monotonicity
conditions  called {\it proper} which are made up of the two
conditions
\begin{equation}\label{eq0.9}
F(x,r,p,X)\le F(x,s,p, X) \quad\mathrm{ whenever }\quad r\le s ;
\end{equation}
and
\begin{equation}\label{eq01}
    F(x,r,p,X)\le F(x,r,p, Y) \quad\mathrm{ whenever }\quad Y\le X;
\end{equation}
where  $(x,r,p,X)\in \mathcal{F}(M)$ and $(x,s,p,Y)\in
\mathcal{F}(M)$.

For any function $u:M\to \mathbb{R}$, we define
\begin{definition}
\begin{equation}\label{eq02}
    J^{2,+}u(x_0)=\{(p,X)\in T_{x_0}^{*}M\times\mathcal{S}^2 T_{x_0}^{*}M, \quad\mathrm{satisfies}\quad (\ref{eq03}). \}
\end{equation}
where (\ref{eq03}) is
\begin{equation}\label{eq03}
    u(x)\le
    u(x_0)+p(\exp_{x_0}^{-1}x)+\frac12 X(\exp_{x_0}^{-1}x,\exp_{x_0}^{-1}x)+o(|\exp_{x_0}^{-1}x|^2),
    \quad \mathrm{as}\quad x\to x_0,
\end{equation}
where $\exp_{x_0}: T_{x_0}M\to M$ is the exponential map. And
$J^{2,-}u(x_0)$ is defined as
\begin{equation*}
    J^{2,-}u(x_0)=\{(p,X)\in
    T_{x_0}^{*}M\times\mathcal{S}^2 T_{x_0}^{*}M\textrm{ such that }
    (-p,-X)\in J^{2,+}(-u)(x_0)\}.
\end{equation*}
\end{definition}
\begin{remark}\label{rk1}(\ref{eq03}) is equivalent to that the function
$\bar{u}$  on $T_{x_0}M$ defined by $\bar{u}(y)=u(\exp_{x_0}y)$
satisfies
\begin{equation}\label{eq03.5}
    \bar{u}(y)\le
    \bar{u}(0)+\left\langle p,y\right\rangle+\frac12 X(y,y)+o(|y|^2),
    \quad \mathrm{as}\quad y\to 0.
\end{equation}
So we can identify $T_0(T_{x_0}M)$ with $T_{x_0}M$ such that
$(p,X)\in J^{2,+}u(x_0)$ if and only if $(p,X)\in
J^{2,+}\bar{u}(0)$.
\end{remark}
We also use the following notations.
\begin{itemize}
       \item $\USC (M)=\{\textrm {upper semicontinuous functions on
       $M$}\},$
       \item $\LSC (M)=\{\textrm {lower semicontinuous functions on
       $M$}\}.$
     \end{itemize}
\begin{definition}A viscosity subsolution of $F=0$ on $M$ is a
function $u\in \USC(M)$ such that
\begin{equation}\label{eq04}
    F(x,u,p,X)\le 0 \textrm{ for all }x\in M \textrm{ and }(p,X)\in
    J^{2,+}u(x).
\end{equation}
A viscosity supersolution of $F=0$ on $M$ is a function $u\in
\LSC(M)$ such that
\begin{equation}\label{eq05}
    F(x,u,p,X)\ge 0 \textrm{ for all }x\in M \textrm{ and }(p,X)\in
    J^{2,-}u(x).
\end{equation}
$u$ is a viscosity solution of $F=0$ on $M$ if it is both a
viscosity subsolution and a viscosity supersolution of $F=0$ on $M$.

\end{definition}
Following \cite{cil}, we can similarly define $\bar J^{2,+}u(x)$,
$\bar J^{2,-}u(x)$ as
\begin{equation*}
    \bar J^{2,+}u(x)=\left\{\begin{split}
                                 & (p,X)\in T_{x_0}^{*}M\times\mathcal{S}^2 T_{x_0}^{*}M, \textrm{ such that } (x_0,u(x_0), p, X)
    \textrm{ is a limit point of } \\ &(x_k,u(x_k),p_k, X_k)\in J^{2,+}u(x_k)
    \textrm{ in the  topology
    of  }\mathcal{F}(M),
    \end{split}\right\}
\end{equation*}
and
\begin{equation*}
    \bar J^{2,-}u(x)=\left\{\begin{split}
                                 & (p,X)\in T_{x_0}^{*}M\times\mathcal{S}^2 T_{x_0}^{*}M, \textrm{ such that } (x_0,u(x_0), p, X)
    \textrm{ is a limit point of } \\ &(x_k,u(x_k),p_k, X_k)\in J^{2,-}u(x_k)
    \textrm{ in the  topology
    of  }\mathcal{F}(M).
    \end{split}\right\}
\end{equation*}

\section{Hessian-type comparison Theorem for
$\dist(x,y)^2$}\label{secComp}

One of the key points  to generalize  the  maximum principle to a
complete Riemannian manifold  is a  new Hessian comparison theorem
for Riemannian manifold. This is essentially an application of
second variational formula for the arclength.

Let $M$ be a Riemannian manifold. We define the function of square
of the distance function as $\varphi: M\times M\to \mathbb{R}$ as
\begin{equation*}
    \varphi(x,y)=\dist (x,y)^2.
\end{equation*}
It is well known that this function is smooth when $\dist (x,y)$ is
small. We will prove
\begin{theorem}\label{thmhess}Let $M$ be a connected Riemannian manifold with sectional curvature bounded below by $-\kappa^2$.
 Given two points $x,
y\in M$ with $\dist(x,y)< \min\{i(x), i(y)\}$, $\gamma:[0,l]\to M$
is the unique geodesic of unit speed with $\gamma(0)=x$ and
$\gamma(l)=y$. Denote by $P_{\gamma}(t):T_{\gamma(0)}M\to
T_{\gamma(t)}M$ the parallel transport  along $\gamma$. Then any two
vectors $V_1$ and $V_2$ satisfying $\langle V_1,
\gamma'(0)\rangle=\langle V_2, \gamma'(l)\rangle=0$,  the Hessian of
the square  of distance function $\varphi$ on $M\times
 M$ satisfies

\begin{equation}\label{eq7}
\begin{split}
    D^2\varphi((V_1,V_2),(V_1,V_2)) \le &2l\kappa[\coth \kappa l\langle V_1,V_1\rangle+\coth \kappa l\langle
    V_2,V_2\rangle]\\
    &-2l\kappa[ \frac{2}{\sinh \kappa
    l}\langle V_2, P_{\gamma}(l)V_1\rangle],
    \end{split}
\end{equation}
Particularly,
\begin{equation}\label{eq8}
\begin{split}
    D^2\varphi((V_1,P_{\gamma}(l)V_1),(V_1,P_{\gamma}(l)V_1)) \le &4|V_1|^2\kappa l\tanh\frac{\kappa l}2
    .
    \end{split}
\end{equation}

\end{theorem}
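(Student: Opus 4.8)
### Proof proposal

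The plan is to compute the Hessian of $\varphi = \dist(x,y)^2$ along the curve generated by the variation vector fields, and to compare it with the model case of constant curvature $-\kappa^2$ via the second variation formula for arclength and a Rauch-type comparison. Fix the minimizing unit-speed geodesic $\gamma:[0,l]\to M$ with $\gamma(0)=x$, $\gamma(l)=y$, where $l=\dist(x,y)<\min\{i(x),i(y)\}$, so $\varphi$ is smooth near $(x,y)$. First I would reduce to computing $\frac{d^2}{ds^2}\big|_{s=0}\big[\dist(\alpha(s),\beta(s))^2\big]$, where $\alpha,\beta$ are unit-speed curves with $\alpha(0)=x$, $\beta(0)=y$, $\alpha'(0)=V_1$, $\beta'(0)=V_2$; since $D^2\varphi$ is the Hessian as a bilinear form on $T_xM\oplus T_yM$, this second derivative equals $D^2\varphi((V_1,V_2),(V_1,V_2))$ plus a first-order term that drops out because we choose $\alpha,\beta$ to be geodesics (so that $\nabla_{\alpha'}\alpha'=\nabla_{\beta'}\beta'=0$ at $s=0$). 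Because $L=\dist$ and $\varphi=L^2$, we have $\varphi'' = 2L L'' + 2(L')^2$; the term $L'$ is the first variation, which vanishes in the directions orthogonal to $\gamma$ at both endpoints (this uses $\langle V_1,\gamma'(0)\rangle=\langle V_2,\gamma'(l)\rangle=0$), so $\varphi''(0) = 2l\, L''(0)$.

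Next I would invoke the second variation formula for arclength. For the variation of $\gamma$ through geodesics with endpoint velocities $V_1$ at $s=0$ and $V_2$ at $s=l$, the variation field $J(t)$ along $\gamma$ is the Jacobi field with $J(0)=V_1$, $J(l)=V_2$ (after parallel identification, i.e.\ thinking of $V_1,V_2$ as living in $T_xM$, $T_yM$), and the index form gives
\begin{equation*}
L''(0) = I(J,J) = \int_0^l \left( |J'(t)|^2 - \langle R(J,\gamma')\gamma', J\rangle \right) dt = \langle J'(l), J(l)\rangle - \langle J'(0), J(0)\rangle,
\end{equation*}
the last equality by integration by parts and the Jacobi equation. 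So the problem becomes: bound $\langle J'(l),V_2\rangle - \langle J'(0),V_1\rangle$ from above, where $J$ is the unique normal Jacobi field with prescribed boundary values. The main obstacle is precisely this: controlling the Jacobi field $J$ using only a lower sectional curvature bound. The right tool is the Rauch comparison theorem (or, more directly, a comparison for the index form): since $\sec \ge -\kappa^2$, the index form on $M$ dominates the index form of the model space $\mathbb{H}^n(-\kappa^2)$ for vector fields with matching boundary data, and minimizing over all fields with fixed boundary values, the minimizer (the Jacobi field) in $M$ has index form bounded above by that of the corresponding model Jacobi field. Concretely, I would write the model Jacobi field explicitly: along a unit-speed geodesic in constant curvature $-\kappa^2$, the normal Jacobi field with $J(0)=V_1$, $J(l)=V_2$ is
\begin{equation*}
J(t) = \frac{\sinh \kappa(l-t)}{\sinh \kappa l}\, P_\gamma(t)V_1 + \frac{\sinh \kappa t}{\sinh \kappa l}\, \widetilde{P}_\gamma(t)V_2,
\end{equation*}
where the parallel transports place the endpoint vectors along $\gamma$. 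A direct computation of $\langle J'(l),J(l)\rangle - \langle J'(0),J(0)\rangle$ for this explicit field yields
\begin{equation*}
L''(0) \le \kappa\coth\kappa l\,(|V_1|^2 + |V_2|^2) - \frac{2\kappa}{\sinh\kappa l}\langle V_2, P_\gamma(l)V_1\rangle,
\end{equation*}
and multiplying by $2l$ gives \eqref{eq7}.

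Finally, for \eqref{eq8} I would specialize $V_2 = P_\gamma(l)V_1$, so $\langle V_2,V_2\rangle = |V_1|^2$ and $\langle V_2, P_\gamma(l)V_1\rangle = |V_1|^2$, whence the bracket in \eqref{eq7} becomes $2l\kappa|V_1|^2\big(2\coth\kappa l - \tfrac{2}{\sinh\kappa l}\big) = 4l\kappa|V_1|^2\cdot\frac{\cosh\kappa l - 1}{\sinh\kappa l} = 4l\kappa|V_1|^2\tanh\tfrac{\kappa l}{2}$, using the half-angle identity $\frac{\cosh\theta - 1}{\sinh\theta} = \tanh\tfrac{\theta}{2}$. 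The two points I would be most careful about are: (i) justifying that the orthogonality hypotheses kill the first-variation term so that $\varphi'' = 2lL''$ exactly (rather than just up to lower order), which requires choosing the competitor curves $\alpha,\beta$ to be geodesics and noting that the Hessian is independent of that choice at a point where $\varphi$ is smooth; and (ii) the index-form comparison itself — making sure the inequality goes in the direction claimed, i.e.\ that the lower curvature bound forces the index form (hence $L''$) to be \emph{no larger} than in the model, which is the content of the Rauch comparison applied to the minimizing vector field. One should also check the degenerate limit $\kappa\to 0$, where the bound reduces to $D^2\varphi \le 2(|V_1|^2 + |V_2|^2) - 4\langle V_2,P_\gamma(l)V_1\rangle$, recovering the Euclidean Hessian $\left(\begin{smallmatrix} I & -I \\ -I & I\end{smallmatrix}\right)$ scaled appropriately, which is a good consistency check.
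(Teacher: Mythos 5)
Your proof takes essentially the same route as the paper: both express $D^2\varphi((V_1,V_2),(V_1,V_2))$ as $2l\,I(J,J)$ for the normal Jacobi field $J$ with $J(0)=V_1$, $J(l)=V_2$, invoke the minimality of the Jacobi index form to replace $J$ by the explicit constant-curvature field $X(t)=\frac{\sinh\kappa(l-t)}{\sinh\kappa l}V_1(t)+\frac{\sinh\kappa t}{\sinh\kappa l}V_2(t)$ in parallel frames, and read off the boundary term $\langle \nabla_{\gamma'}X,X\rangle\big|_0^l$, which yields exactly the displayed bound and, with $V_2=P_\gamma(l)V_1$, the half-angle simplification to $4l\kappa\tanh(\kappa l/2)|V_1|^2$ — the paper just carries out this index-form comparison explicitly rather than citing Rauch as a black box. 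One small slip: you write that the index form on $M$ ``dominates'' that of the model, but under $\sec_M\ge -\kappa^2$ the inequality goes the other way ($I_M(X)\le I_{\mathrm{model}}(\tilde X)$ for corresponding fields), which is the direction your subsequent conclusion actually requires, so the argument stands despite the misstatement.
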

Before proving the theorem we give some remarks.
\begin{remark} When $M$ is the  hyperbolic space $\mathbb{H}^n(-\kappa^2)$, we can also write the $D^2\varphi$ on the subspace
$\{\gamma(0)\}^{\bot}\times\{\gamma(l)\}^{\bot}\subset T_xM\times
T_yM$ as
\begin{equation*}
    2l\kappa\left(
       \begin{array}{cc}
         I\coth \kappa l & -\frac{1}{\sinh \kappa l} I\\
         -\frac{1}{\sinh \kappa l}I & I\coth \kappa l \\
       \end{array}
     \right).
\end{equation*}

\end{remark}
\begin{remark} The estimates in theorem is sharp in sense that all inequalities becomes equalities for space forms. The theorem improves significantly  Proposition 3.3
in \cite{afs} also part (1) of Proposition 3.1.
\end{remark}

\begin{remark} Note that we allow  $\kappa$ to be an imaginary
 and in case that the curvature is bounded below by a positive
constant, we can get a corresponding estimate.
\end{remark}
\begin{remark} We  also prove a version similar to Laplacian comparison theorem when we have a Ricci curvature lower
bound. We will see that it is useful in applications.
\end{remark}
\begin{theorem}Let $M$ be a connected Riemannian manifold. Given two points $x,
y\in M$ with $\dist(x,y)< \min\{i(x), i(y)\}$, $\gamma:[0,l]\to M$
is the unique geodesic of unit speed with $\gamma(0)=x$ and
$\gamma(l)=y$, then for any two vectors $V_1\in T_xM, V_2\in T_yM$
$D^2\varphi((V_1,V_2),(V_1,V_2))$
 satisfying $\langle V_1,
\gamma'(0)\rangle=\langle V_2, \gamma'(l)\rangle=0$
\begin{equation}\label{eq0}
    D^2\varphi((V_1,V_2),(V_1,V_2)) =2l\int_0^l[|\nabla_{\gamma'}J|^2-\langle
R(\gamma',J)\gamma',J\rangle] dt,
\end{equation}
where  $J$ is the Jacobi field along $\gamma$ with $J(0)=V_1$ and
$J(l)=V_2.$

\end{theorem}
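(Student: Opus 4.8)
The plan is to compute $D^2\varphi$ as the second variation of (twice) the energy, or equivalently of the squared length, along the unique minimizing geodesic $\gamma$ joining $x$ to $y$. Fix tangent vectors $V_1\in T_xM$ and $V_2\in T_yM$ with $\langle V_1,\gamma'(0)\rangle=\langle V_2,\gamma'(l)\rangle=0$, and choose a variation $\Gamma(s,t)$ of $\gamma$ with $\Gamma(0,t)=\gamma(t)$, $s\mapsto\Gamma(s,0)$ a curve in $M$ with initial velocity $V_1$, and $s\mapsto\Gamma(s,l)$ a curve with initial velocity $V_2$. Since $\dist(x,y)<\min\{i(x),i(y)\}$, the function $\varphi$ is smooth near $(x,y)$, and $\varphi(\Gamma(s,0_{\text{source}}),\Gamma(s,l_{\text{source}}))$ agrees to second order with $L(s)^2$, where $L(s)$ is the length of $t\mapsto\Gamma(s,t)$; the key point is that one is free to take the variation field $W(t)=\partial_s\Gamma(0,t)$ to be precisely the Jacobi field $J$ along $\gamma$ with $J(0)=V_1$, $J(l)=V_2$, because the value of the second variation of $\varphi$ depends only on the endpoint data $(V_1,V_2)$ and not on the chosen extension.

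Next I would carry out the second-variation computation. Writing $L(s)$ for the length of the varied curve and using that $\gamma$ is a unit-speed geodesic so $L(0)=l$ and $L'(0)=\langle W,\gamma'\rangle\big|_0^l=0$ (the boundary terms vanish because $V_1\perp\gamma'(0)$, $V_2\perp\gamma'(l)$), the standard second variation formula gives
\begin{equation*}
\frac{d^2}{ds^2}\Big|_{s=0}\tfrac12 L(s)^2
= l\,L''(0)
= l\int_0^l\big[|\nabla_{\gamma'}W|^2-\langle R(\gamma',W)\gamma',W\rangle\big]\,dt
+ l\big[\langle\nabla_{\partial_s}\partial_s\Gamma,\gamma'\rangle\big]_{t=0}^{t=l}.
\end{equation*}
The last (acceleration) boundary term can be killed by choosing the endpoint curves to be geodesics in $M$, and in any case $\varphi=\dist(\cdot,\cdot)^2$ so the factor $2$ appears: $D^2\varphi((V_1,V_2),(V_1,V_2)) = \tfrac{d^2}{ds^2}\big|_0 L(s)^2 = 2l\int_0^l[|\nabla_{\gamma'}W|^2-\langle R(\gamma',W)\gamma',W\rangle]\,dt$. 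Finally, among all vector fields $W$ along $\gamma$ with the prescribed boundary values, the index form $I(W,W)=\int_0^l[|\nabla_{\gamma'}W|^2-\langle R(\gamma',W)\gamma',W\rangle]\,dt$ is minimized exactly by the Jacobi field $J$ with those boundary values (this is the classical minimizing property of Jacobi fields when there are no conjugate points on $[0,l]$, which holds here since $\gamma$ is minimizing and shorter than the injectivity radius). Since the left-hand side $D^2\varphi$ is independent of the extension $W$, evaluating with $W=J$ yields the claimed identity \eqref{eq0}.

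The main technical care — the part I expect to require the most attention — is the bookkeeping that reduces $D^2\varphi$ (a Hessian on the product manifold $M\times M$, defined via $\exp$ and the Riemannian connection on $M\times M$) to the honest second derivative $\frac{d^2}{ds^2}\big|_0$ of $\varphi$ along an arbitrary product variation with the given velocity $(V_1,V_2)$, together with the verification that the connection (acceleration) correction terms drop out. Using \eqref{eq0.6} one may work in normal coordinates at $(x,y)$, so that $D^2\varphi((V_1,V_2),(V_1,V_2))$ is literally $\partial_s^2\big|_{s=0}\,\varphi(\exp_x(sV_1),\exp_y(sV_2))$, which removes the Christoffel terms on the source side; choosing the endpoint curves $s\mapsto\exp_x(sV_1)$ and $s\mapsto\exp_y(sV_2)$ to be geodesics then removes the target-side acceleration term in the second variation formula. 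With this normalization in place, everything else is the classical second-variation-of-arclength computation and the Jacobi-field minimization of the index form.
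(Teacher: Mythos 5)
Your overall plan (second variation of $L^2$, reducing to the index form) is the paper's, but the logical chain in your middle and final paragraphs is not correct as stated. You claim that ``$\varphi(\Gamma(s,0),\Gamma(s,l))$ agrees to second order with $L(s)^2$'' for a variation with arbitrary transverse field $W$ having the right endpoint values, and then conclude that $D^2\varphi=2l\,I(W,W)$ for all such $W$, after which you ``evaluate with $W=J$'' and invoke index-form minimization. This cannot be right: if $D^2\varphi=2l\,I(W,W)$ held for every admissible $W$, then $I(W,W)$ would be constant in $W$, which it is not. What actually happens for a general extension $W$ is that $L(s)\ge\dist(\Gamma(s,0),\Gamma(s,l))$ with equality at $s=0$, so the two functions agree to \emph{first} order but not to second order, and one only gets the inequality $D^2\varphi\le 2l\,I(W,W)$. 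That inequality is what is used in the \emph{next} theorem (the Hessian comparison), via a cleverly chosen test field; it is not what proves the present equality.

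The equality requires the specific extension $W=J$, and the reason is not the minimizing property of Jacobi fields but the fact that the Jacobi-field variation can be realized as a variation through \emph{minimizing geodesics}: set $\alpha(s)=\exp_x(sV_1)$, $\beta(s)=\exp_y(sV_2)$, and let $\Gamma(s,\cdot)$ be the unique minimizing geodesic from $\alpha(s)$ to $\beta(s)$ in a convex neighborhood. Then $\partial_s\Gamma(0,\cdot)$ is the Jacobi field $J$ (as a variation through geodesics with the prescribed boundary values), and $L(s)=\dist(\alpha(s),\beta(s))$ holds \emph{identically}, not merely to second order. Hence $D^2\varphi((V_1,V_2),(V_1,V_2))=\tfrac{d^2}{ds^2}\big|_0L(s)^2=2l\,L''(0)$ exactly; the second-variation formula with the acceleration boundary term dropping out (since $\alpha,\beta$ are geodesics and $V_1\perp\gamma'(0)$, $V_2\perp\gamma'(l)$) gives $2l\,I(J,J)$. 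If you replace the ``agrees to second order'' claim and the final appeal to ``Jacobi-field minimization of the index form'' with this geodesic-variation argument, your proof coincides with the paper's derivation in (\ref{eq4})--(\ref{eq6}).
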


Recall that given a $C^2$ curve $\gamma:[a,b]\to M$. A variation
$\eta$ of $\gamma$ is a $C^2$ mapping $\eta:[a,b]\times
(-\epsilon_0, \epsilon_0)\to M$ for some $\epsilon_0>0,$  for which
$\gamma(t)=\eta(t,0)$ for all $t\in [a,b]$. We write
$\partial_t\eta,
\partial_{\epsilon}\eta $ for $\eta_{*}(\partial_t),
\eta_{*}(\partial_{\epsilon})$ respectively, and denote
differentiation of vector field along $\eta$ with respect to
$\partial_t$, $\partial_{\epsilon}$ by $\nabla_t, \nabla_{\epsilon}$
respectively. Then the length of $\eta_{\epsilon}$, namely
\begin{equation*}
    L(\epsilon)=\int_a^b|\partial_t\eta(t,\epsilon)|dt,
\end{equation*}
is differentiable and
\begin{equation}\label{eq1}
\frac{dL}{d\epsilon}=\langle\partial_{\epsilon}\eta,\partial_t\eta/|\partial_t\eta|\rangle|_a^b-
\int_a^b\langle\partial_{\epsilon}\eta,\nabla_t(\partial_t\eta/|\partial_t\eta|)\rangle
dt.
\end{equation}
In particular, if $\eta$ is parameterized with respect to arc
length, and we set $V(t)=(\partial_\epsilon\eta)(t,0)$, then for the
first derivative of $L$ we have
\begin{equation}\label{eq2}
\frac{dL}{d\epsilon}(0)=\langle\partial_{\epsilon}\eta,\gamma'\rangle
|_a^b- \int_a^b\langle\partial_{\epsilon}\eta,\nabla_t\gamma'\rangle
dt,
\end{equation}
and for the second derivative of $L$ we have
\begin{equation}
\begin{split}\label{eq3}
\frac{d^2L}{d\epsilon^2}(0)=&\langle\nabla_{\epsilon}\partial_{\epsilon}\eta|_{\epsilon=0},\gamma'\rangle |_a^b-\\
&\int_a^b[|\nabla_tV|^2+\langle
R(\gamma',V)\gamma',V\rangle-\langle\nabla_{\epsilon}\partial_{\epsilon}\eta,\nabla_t\gamma'\rangle-\langle\gamma',\nabla_tV\rangle^2]
dt.
\end{split}
\end{equation}
Let $x,y$ be two non conjugate points connected by a unique
minimizing geodesic $\gamma: [0,l]\to M$ with $l=\dist (x,y)\le
\min\{i(x), i(y)\}$. Given two vectors $V_1\in T_xM, V_2\in T_yM$,
there exist a unique Jacobi field $J:[0,l]\to TM$ such that
$J(0)=V_1$ and $J(l)=V_2$. Thus we can construct a variation  $\eta$
of $\gamma$,
 $\eta:[0,l]\times (-\epsilon_0, \epsilon_0)\to M$ for some
$\epsilon_0>0,$  for which $\gamma(t)=\eta(t,0)$ for all $t\in
[0,l]$ and  $\partial_{\epsilon}\eta(0,0)=V_1$ and
$\partial_{\epsilon}\eta(l,0)=V_2$. We know from the properties of
Jacobi field that $\eta(\cdot, \epsilon)$ is a geodesic for each
$\epsilon\in (-\epsilon_0, \epsilon_0)$. So
\begin{equation}\label{eq4}
 L(\epsilon)=\int_a^b|\partial_t\eta(t,\epsilon)|dt=\dist
 (\eta(0,\epsilon), \eta(l,\epsilon)).
\end{equation}
Thus, by the first variational formula, we can calculate the
differential of $\varphi$,
\begin{equation}\label{eq5}
    D\varphi(V_1,V_2)=2l[\langle V_2,\gamma'(l)\rangle-\langle
    V_1,\gamma'(0)\rangle].
\end{equation}
To compute the second differential, we only need to compute
$D^2\varphi((V_1,V_2),(V_1,V_2))$ for $V_1$ and $V_2$ satisfying
$\langle V_1, \gamma'(0)\rangle=\langle V_2, \gamma'(l)\rangle=0$.
In this case, $J$ are normal Jacobi fields.  Note that from the
construction of variation from the Jacobi field, both
$\eta(0,\cdot)$ and $\eta(l,\cdot)$ are geodesics and we are working
in a convex neighborhood. So
\begin{equation}\label{eq6}\begin{split}
                             D^2\varphi((V_1,V_2),(V_1,V_2)) &=2l\frac{d^2L}{d\epsilon^2}(0) \\
                               & =2l\int_0^l[|\nabla_tJ|^2-\langle
R(\gamma',J)\gamma',J\rangle] dt.
                           \end{split}
\end{equation}

\begin{proof}Since the Jacobi fields minimize the index form $I(X,X)$ with the
same boundary conditions $X(0)=V_1$ and $X(l)=V_2$, where
\begin{equation}\label{eq7.1}
    I(X,X)=\int_0^l[|\nabla_{\gamma'}X|^2-\langle
R(\gamma',X)\gamma',X\rangle] dt.
\end{equation}
We can obtain  vector fields $V_1(t)$ and $V_2(t)$ by parallel
transport of $V_1$ and $V_2$ respectively. Choose
\begin{equation*}
    X(t)=(\cosh\kappa t-\coth\kappa l\sinh\kappa t)V_1(t)+\frac{\sinh\kappa
    t}{\sinh\kappa l}V_2(t).
\end{equation*}
Then
\begin{equation}\label{eqn8}
\begin{split}
  I(X,X)&=\langle\nabla_{\gamma'}X,X\rangle|_0^l-\int_0^l[\langle\nabla_{\gamma'}\nabla_{\gamma'}X,X\rangle+\langle
R(\gamma',X)\gamma',X\rangle] dt  \\
    & \le \langle\nabla_{\gamma'}X,X\rangle|_0^l\\
    &=\langle\kappa(\sinh\kappa l-\coth \kappa l\cosh\kappa
    l)V_1(l),V_2\rangle+\\
    &\qquad\langle\kappa\coth \kappa l V_2,
    V_2\rangle-\langle-\kappa\coth \kappa l V_1+\frac{\kappa}{\sinh \kappa
    l}V_2(0), V_1\rangle\\
    &=\kappa\coth \kappa l\langle V_1,V_1\rangle+\kappa\coth \kappa l\langle
    V_2,V_2\rangle- \frac{2\kappa}{\sinh \kappa
    l}\langle V_2(0), V_1\rangle.
\end{split}
\end{equation}
The theorem follows from
\begin{equation*}
    D^2\varphi((V_1,V_2),(V_1,V_2))\le 2lI(X,X).
\end{equation*}
when $V_2=P_{\gamma}(l)V_1$, we have

\begin{equation}\label{eq8.1}
\begin{split}
    D^2\varphi((V_1,P_{\gamma}(l)V_1),(V_1,P_{\gamma}(l)V_1)) \le &2l\kappa[\coth \kappa l\langle V_1,V_1\rangle+\coth \kappa l\langle
    P_{\gamma}(l)V_1),P_{\gamma}(l)V_1)\rangle]\\
    &-2l\kappa[ \frac{2}{\sinh \kappa
    l}\langle P_{\gamma}(l)V_1), P_{\gamma}(l)V_1\rangle]\\
    =&4l\kappa\tanh\frac{\kappa l}2 |V_1|^2.
    \end{split}
\end{equation}

\end{proof}

\begin{theorem}\label{thmlaplace}Let $M$ be a  Riemannian manifold with Ricci curvature bounded below by $-(n-1)\kappa^2$.
 Given two points $x,
y\in M$ with $\dist(x,y)< \min\{i(x), i(y)\}$, $\gamma:[0,l]\to M$
is the unique geodesic of unit speed with $\gamma(0)=x$ and
$\gamma(l)=y$. For any normal base at $x$, $\{\gamma'(0),
e_2,\cdots,e_n \}$.
\begin{equation}\label{eq8.2}
\begin{split}
    \sum_{i=2}^{n-1}D^2\varphi((e_i,P_{\gamma}(l)e_i),(e_i,P_{\gamma}(l)e_i) \le &4(n-1)\kappa l\tanh\frac{\kappa l}2
    .
    \end{split}
\end{equation}

\end{theorem}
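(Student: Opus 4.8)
The plan is to imitate the proof of Theorem \ref{thmhess}, but to use one and the same radial test profile in every direction orthogonal to $\gamma'$, so that upon summing the curvature terms assemble into $\Ric(\gamma',\gamma')$, which is the only quantity we control. Throughout, write $l=\dist(x,y)$.

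First I would recall from (\ref{eq6}) together with the index-form minimality of Jacobi fields (as used in the proof of Theorem \ref{thmhess}) that for \emph{any} vector field $X$ along $\gamma$ with $X(0)=V_1$, $X(l)=V_2$ one has $D^2\varphi((V_1,V_2),(V_1,V_2))\le 2l\,I(X,X)$ with $I$ the index form (\ref{eq7.1}); this is legitimate here because $e_i\perp\gamma'(0)$ by hypothesis and $P_\gamma(l)e_i\perp\gamma'(l)$ since parallel transport is an isometry. For each $i\in\{2,\dots,n\}$ let $E_i(t)$ be the parallel field along $\gamma$ with $E_i(0)=e_i$ (so $E_i(l)=P_\gamma(l)e_i$) and set $X_i(t)=f(t)E_i(t)$, where
\[
f(t)=\cosh\kappa t-\coth\kappa l\,\sinh\kappa t+\frac{\sinh\kappa t}{\sinh\kappa l},
\]
the \emph{same} scalar function in every direction. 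An elementary check gives $f(0)=f(l)=1$ and $f''=\kappa^2 f$. Since $\nabla_{\gamma'}X_i=f'E_i$, the index form reduces to $I(X_i,X_i)=\int_0^l\big[f'(t)^2-f(t)^2\langle R(\gamma',E_i)\gamma',E_i\rangle\big]\,dt$.

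Next I would sum over $i=2,\dots,n$. Because $\{\gamma'(t),E_2(t),\dots,E_n(t)\}$ is an orthonormal frame along $\gamma$, $\sum_{i=2}^n\langle R(\gamma',E_i)\gamma',E_i\rangle=\Ric(\gamma',\gamma')\ge -(n-1)\kappa^2$; since $f^2\ge 0$ this yields $\sum_i I(X_i,X_i)\le (n-1)\int_0^l[f'^2+\kappa^2 f^2]\,dt$. Integrating by parts and using $f''=\kappa^2 f$ gives $\int_0^l[f'^2+\kappa^2 f^2]\,dt=\int_0^l(ff')'\,dt=f(l)f'(l)-f(0)f'(0)$, and the identity $(\cosh\theta-1)/\sinh\theta=\tanh(\theta/2)$ gives $f'(0)=-\kappa\tanh\frac{\kappa l}{2}$, $f'(l)=\kappa\tanh\frac{\kappa l}{2}$, so this integral equals $2\kappa\tanh\frac{\kappa l}{2}$. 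Combining with the index-form bound,
\[
\sum_{i=2}^n D^2\varphi\big((e_i,P_\gamma(l)e_i),(e_i,P_\gamma(l)e_i)\big)\le 2l\sum_{i=2}^n I(X_i,X_i)\le 4(n-1)\kappa l\tanh\frac{\kappa l}{2},
\]
which is the asserted estimate (the displayed sum should run to $i=n$, i.e.\ over the full orthogonal complement of $\gamma'$, giving $n-1$ terms consistent with the right-hand side).

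There is no deep obstacle here once the idea is in place; the one thing to get right is that the test fields must share a common radial profile $f$ independent of $i$ (otherwise the curvature terms do not recombine into $\Ric$), together with the sign observation that $f^2\ge 0$, so that a Ricci \emph{lower} bound produces an \emph{upper} bound on $-f^2\,\Ric(\gamma',\gamma')$. The remaining computations — verifying $f(0)=f(l)=1$, $f''=\kappa^2 f$, and the two boundary values of $f'$ — are exactly the elementary manipulations already performed at the end of the proof of Theorem \ref{thmhess}.
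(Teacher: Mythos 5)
Your proof is correct and follows essentially the same approach as the paper: bound each $D^2\varphi$ term by the index form of a test field $f(t)E_i(t)$ with a common radial profile, sum to obtain $\Ric(\gamma',\gamma')$, and apply the Ricci lower bound. Your $f$ is algebraically identical to the paper's $\psi(t)=\cosh\kappa t+\frac{1-\cosh\kappa l}{\sinh\kappa l}\sinh\kappa t$, and you helpfully carry out the final computation (which the paper merely calls a ``direct computation'') and flag the typo $\sum_{i=2}^{n-1}$ for $\sum_{i=2}^{n}$.
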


\begin{proof}Similar to the above proof, we use the property that  the Jacobi field minimize the index form $I(X,X)$ with the
same boundary conditions $X(0)=V_1$ and $X(l)=V_2$. We can obtain
vector fields $e_i(t)$  by parallel transport of $e_i$. For any
 function $\psi\in C^2[0,l]$ with $\psi(0)=\psi(l)=1$, we
choose
\begin{equation*}
    X_i(t)=\psi(t)e_i(t).
\end{equation*}
Then
\begin{equation*}
\begin{split}
    D^2\varphi((e_i,P_{\gamma}(l)e_i),(e_i,P_{\gamma}(l)e_i))&\le 2l\int_0^l[|\nabla_{\gamma'}X_i|^2-\langle
R(\gamma',X_i)\gamma',X_i\rangle] dt  \\
&=2l\int_0^l[|\psi'|^2-\psi^2K(\gamma',e_i)] dt,
\end{split}\end{equation*}
 where $K(X,Y)$ is the sectional curvature of the plane spanned by vectors $X,Y$. Then  we have
\begin{equation*}
\begin{split}
    \sum_{i=2}^nD^2\varphi((e_i,P_{\gamma}(l)e_i),(e_i,P_{\gamma}(l)e_i))
&\le2l\int_0^l[(n-1)|\psi'|^2-\psi^2\Ric(\gamma',\gamma')] dt\\
&\le 2(n-1)l\int_0^l[|\psi'|^2+\kappa^2\psi^2] dt.
\end{split}\end{equation*}
where $\Ric$ denotes the Ricci curvature. Let
\begin{equation*}
    \psi(t)=\cosh \kappa t+\frac{1-\cosh\kappa l}{\sinh\kappa
    l}\sinh \kappa t.
\end{equation*}
The claimed result follows from  a direct computation.

\end{proof}

\section{The maximum principle for viscosity solutions}
\subsection{Dirichlet problem case}
The same proof of Lemma 3.1 in \cite{cil} gives the following lemma.
\begin{lemma}Let $M$ be a compact Riemannian manifold with or
without boundary, $u\in \USC(M)$ $v\in \LSC(M)$ and
\begin{equation}\label{eq9}
    \mu_{\alpha}=\sup\{u(x)-v(y)-\frac{\alpha}2\dist(x,y)^2, x,y\in M \}
\end{equation}
for $\alpha>0.$ Assume  that $\mu_{\alpha}<+\infty$ for large
$\alpha$ and $(x_{\alpha},y_{\alpha})$  satisfy
\begin{equation}\label{eq10}
    \lim_{\alpha\to\infty}[\mu_{\alpha}-(u(x_{\alpha})-v(y_{\alpha})-\frac{\alpha}2\dist(x_{\alpha},y_{\alpha})^2)]=0.
\end{equation}
Then the following holds:
\begin{equation}\label{eq11}
    \left\{
      \begin{array}{ll}
        (i)\quad\lim_{\alpha\to\infty}\alpha\dist(x_{\alpha},y_{\alpha})^2=0,\quad and & \hbox{} \\
        (ii)\quad \lim_{\alpha\to\infty}\mu_{\alpha}=u(x_0)-v(x_0)=\sup(u(x)-v(x)), & \hbox{} \\
         \textrm{ where } x_0=\lim_{\alpha\to\infty}x_{\alpha}. & \hbox{}
      \end{array}
    \right.
\end{equation}

\end{lemma}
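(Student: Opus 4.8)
The plan is to transcribe, essentially verbatim, the proof of Lemma~3.1 of \cite{cil}: the only properties of $M$ that enter are that $\dist(\cdot,\cdot)^2$ is a continuous, nonnegative function on $M\times M$, that the compactness of $M$ makes every sequence subconvergent, and the triangle inequality. In particular no curvature bound and no smoothness of $\dist(x,y)^2$ is needed here. I would begin with the elementary bookkeeping. Because $M$ is compact, $u\in\USC(M)$ attains its maximum and $v\in\LSC(M)$ attains its minimum, so $\mu_\alpha\le\max_M u-\min_M v<+\infty$ for every $\alpha>0$; putting $y=x$ in (\ref{eq9}) gives $\mu_\alpha\ge\sup_M(u-v)=:M_0$, a finite number; and $\alpha\mapsto\mu_\alpha$ is non-increasing, since increasing $\alpha$ only increases the subtracted penalty $\frac{\alpha}{2}\dist(x,y)^2$. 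Hence $\bar\mu:=\lim_{\alpha\to\infty}\mu_\alpha$ exists and $\bar\mu\ge M_0$; the whole point of the lemma is to show this inequality is an equality, together with (i).

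Next I would prove (i) by the standard ``halving'' device. Write $\delta_\alpha:=\mu_\alpha-\bigl(u(x_\alpha)-v(y_\alpha)-\frac{\alpha}{2}\dist(x_\alpha,y_\alpha)^2\bigr)$, so $\delta_\alpha\to0$ by (\ref{eq10}). Testing the functional that defines $\mu_{\alpha/2}$ at the admissible pair $(x_\alpha,y_\alpha)$ and substituting $u(x_\alpha)-v(y_\alpha)=\mu_\alpha-\delta_\alpha+\frac{\alpha}{2}\dist(x_\alpha,y_\alpha)^2$ gives
\[
\mu_{\alpha/2}\ \ge\ u(x_\alpha)-v(y_\alpha)-\tfrac{\alpha}{4}\dist(x_\alpha,y_\alpha)^2\ =\ \mu_\alpha-\delta_\alpha+\tfrac{\alpha}{4}\dist(x_\alpha,y_\alpha)^2,
\]
so that $\frac{\alpha}{4}\dist(x_\alpha,y_\alpha)^2\le\mu_{\alpha/2}-\mu_\alpha+\delta_\alpha$. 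Letting $\alpha\to\infty$ and using $\mu_\alpha\to\bar\mu$, $\mu_{\alpha/2}\to\bar\mu$, $\delta_\alpha\to0$, the right-hand side tends to $0$; since the left-hand side is nonnegative this forces $\alpha\dist(x_\alpha,y_\alpha)^2\to0$, which is (i), and in particular $\dist(x_\alpha,y_\alpha)\to0$.

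Finally I would prove (ii). By compactness of $M$, pass to a subsequence along which $x_\alpha\to x_0$; then $\dist(y_\alpha,x_0)\le\dist(y_\alpha,x_\alpha)+\dist(x_\alpha,x_0)\to0$, so $y_\alpha\to x_0$ too. From $\frac{\alpha}{2}\dist(x_\alpha,y_\alpha)^2\ge0$ we get $u(x_\alpha)-v(y_\alpha)\ge\mu_\alpha-\delta_\alpha$, hence $\liminf_\alpha\bigl(u(x_\alpha)-v(y_\alpha)\bigr)\ge\bar\mu$; on the other hand, upper semicontinuity of $u$ and lower semicontinuity of $v$ give
\[
\limsup_{\alpha\to\infty}\bigl(u(x_\alpha)-v(y_\alpha)\bigr)\ \le\ \limsup_{\alpha\to\infty}u(x_\alpha)\ -\ \liminf_{\alpha\to\infty}v(y_\alpha)\ \le\ u(x_0)-v(x_0)\ \le\ M_0 .
\]
Combining with $M_0\le\bar\mu$ forces every inequality in the chain $\bar\mu\le\liminf(u(x_\alpha)-v(y_\alpha))\le\limsup(u(x_\alpha)-v(y_\alpha))\le u(x_0)-v(x_0)\le M_0\le\bar\mu$ to be an equality, whence $\lim_{\alpha\to\infty}\mu_\alpha=\bar\mu=M_0=u(x_0)-v(x_0)=\sup_M(u-v)$. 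This is (ii). There is no real obstacle in this argument; the only points that deserve a moment's care are (a) the finiteness and monotonicity of $\mu_\alpha$, which rest on compactness and semicontinuity; (b) keeping the directions of the semicontinuity inequalities straight in the $\limsup$ estimate; and (c) the fact that $x_\alpha\to x_0$ is obtained only along a subsequence, so the symbol ``$x_0=\lim_{\alpha\to\infty}x_\alpha$'' in the statement is to be read as holding for a suitable subsequence --- equivalently, every cluster point of $(x_\alpha)$ maximizes $u-v$ --- while the full limit $\lim_{\alpha\to\infty}\mu_\alpha$ exists unconditionally by monotonicity.
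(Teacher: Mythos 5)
Your proof is correct and is precisely the Crandall--Ishii--Lions Lemma~3.1 argument (the ``halving'' estimate $\tfrac{\alpha}{4}\dist(x_\alpha,y_\alpha)^2\le\mu_{\alpha/2}-\mu_\alpha+\delta_\alpha$ combined with monotonicity of $\mu_\alpha$ for (i), then subsequential compactness and semicontinuity for (ii)), which is exactly what the paper invokes by saying ``the same proof of Lemma 3.1 in \cite{cil} gives the following lemma.'' Your closing remark that $x_0$ must be read as a subsequential limit is a correct and worthwhile clarification of the statement as written.
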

From Remark \ref{rk1} we can deduce from Theorem 3.2 in \cite{cil}
the following lemma.
\begin{lemma} Let $M_1$, $M_2$ be  Riemannian manifolds with
or without boundary, $u_1\in \USC(M_1)$, $u_2\in \LSC(M_2)$ and
$\varphi\in C^2(M_1\times M_2)$. Suppose $(\hat{x},\hat{y})\in
M_1\times M_2$ is a local maximum of $u_1(x)-u_2(y)-\varphi(x,y)$.
Then for any $\varepsilon>0$ there exist $X_1\in \mathcal{S}
T_{\hat{x}}^*M_1$ and $X_2\in \mathcal{S}^2T_{\hat{y}}^*M_2$  such
that
\begin{equation*}
    (D_{x_i}\varphi(\hat{x},\hat{y}), X_i)\in \bar
    J^{2,+}u_i(\hat{x}_i), \textrm{ for } i=1,2,
\end{equation*}
and the block diagonal matrix satisfies
\begin{equation}\label{eq12}
    -\left(\frac1{\varepsilon}+\|A\|\right)I\le\left(
                                               \begin{array}{cc}
                                                 X_1 & 0 \\
                                                 0 & -X_2 \\
                                               \end{array}
                                             \right)
    \le A+\varepsilon A^2
\end{equation}
where $A=D^2\varphi(\hat{x},\hat{y})\in\mathcal{S}^2T^*(M_1\times
M_2)$ and
\begin{equation}\label{eq12.5}
\|A\|=\sup\{| A(\xi,\xi)|, \xi\in T_{(\hat{x},\hat{y})}M_1\times
M_2, |\xi|=1\}.
\end{equation}

\end{lemma}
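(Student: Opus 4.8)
The final statement in the excerpt is Lemma (the "Theorem on Sums" transplanted to manifolds): given Riemannian manifolds $M_1, M_2$, functions $u_1 \in \USC(M_1)$, $u_2 \in \LSC(M_2)$, $\varphi \in C^2(M_1 \times M_2)$, and a local maximum $(\hat x, \hat y)$ of $u_1(x) - u_2(y) - \varphi(x,y)$, for every $\varepsilon > 0$ there exist $X_i \in \mathcal S^2 T^*_{\hat x_i} M_i$ with $(D_{x_i}\varphi(\hat x,\hat y), X_i) \in \bar J^{2,+}u_i(\hat x_i)$ and the matrix inequality (\ref{eq12}) holds for $A = D^2\varphi(\hat x,\hat y)$.

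Let me think about how to prove this. The paper says "From Remark \ref{rk1} we can deduce from Theorem 3.2 in \cite{cil} the following lemma." So the proof strategy is: transplant to the Euclidean case via exponential maps.

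The idea: Pick normal coordinates. Let $n_i = \dim M_i$. Use exponential maps $\exp_{\hat x_i}: T_{\hat x_i} M_i \to M_i$. Define $\bar u_i(y_i) = u_i(\exp_{\hat x_i} y_i)$ for $y_i$ near $0 \in T_{\hat x_i} M_i \cong \mathbb R^{n_i}$. Define $\bar\varphi(y_1, y_2) = \varphi(\exp_{\hat x_1} y_1, \exp_{\hat x_2} y_2)$. Then $(0,0)$ is a local max of $\bar u_1(y_1) - \bar u_2(y_2) - \bar\varphi(y_1, y_2)$ on $\mathbb R^{n_1} \times \mathbb R^{n_2}$, and $\bar\varphi$ is $C^2$. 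Apply the Euclidean Theorem 3.2 of CIL (the theorem on sums) to get $X_i$ (as symmetric matrices) with $(D_{y_i}\bar\varphi(0,0), X_i) \in \bar J^{2,+}\bar u_i(0)$ and $-(\frac1\varepsilon + \|\bar A\|)I \le \mathrm{diag}(X_1, -X_2) \le \bar A + \varepsilon \bar A^2$ where $\bar A = D^2\bar\varphi(0,0)$.

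The crucial point is that by (\ref{eq0.6}), at the point $O$ (i.e. at $0$ in normal coordinates), the Hessian of $\varphi$ pulled back via $\exp$ equals the ordinary second derivative: $D^2\bar\varphi(0,0) = $ (the Hessian $D^2\varphi(\hat x, \hat y)$ computed intrinsically), because the correction term $(\nabla_\xi \eta)$ vanishes at $O$ in normal coordinates. Similarly $D_{y_i}\bar\varphi(0,0) = D_{x_i}\varphi(\hat x, \hat y)$ under the identification $T_0(T_{\hat x_i} M_i) \cong T_{\hat x_i} M_i$ via $d\exp$ at $O$. And by Remark \ref{rk1}, $\bar J^{2,+}\bar u_i(0) = J^{2,+}u_i(\hat x_i)$ under the same identification (and this passes to closures $\bar J^{2,+}$). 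Finally, $\|\bar A\| = \|A\|$ since the identification $d\exp|_O$ is a linear isometry, so the norms defined by (\ref{eq12.5}) agree. This gives (\ref{eq12}) verbatim.

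**Plan.**

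First I would set up normal coordinates at $\hat x$ and $\hat y$: write $N_i = T_{\hat x_i} M_i$ and let $\phi_i = \exp_{\hat x_i}: N_i \to M_i$, which is a diffeomorphism of a neighborhood of $0$ onto a neighborhood of $\hat x_i$. Define the pulled-back data $\bar u_i = u_i \circ \phi_i \in \USC$/$\LSC$ near $0$, and $\bar\varphi = \varphi \circ (\phi_1 \times \phi_2) \in C^2$ near $(0,0)$. Since $\phi_i(0) = \hat x_i$ and the $\phi_i$ are homeomorphisms, $(0,0)$ is a local maximum of $\bar u_1(y_1) - \bar u_2(y_2) - \bar\varphi(y_1, y_2)$.

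Second I would apply Theorem 3.2 of \cite{cil} (the Euclidean theorem on sums, with $k = 2$) at $(0,0)$, identifying each $N_i$ with $\mathbb R^{n_i}$ by a choice of orthonormal basis. This produces symmetric matrices $X_i$ with $(D_{y_i}\bar\varphi(0,0), X_i) \in \bar J^{2,+}\bar u_i(0)$ and the two-sided bound $-(\frac1\varepsilon + \|\bar A\|)I \le \mathrm{diag}(X_1, -X_2) \le \bar A + \varepsilon \bar A^2$ with $\bar A = D^2\bar\varphi(0,0)$.

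Third — and this is the geometric heart of the argument — I would translate each ingredient back through $d\phi_i|_0$, which by the final sentence of the "Preliminaries" section is a linear isomorphism $T_0 N_i \to T_{\hat x_i} M_i$ (and an isometry for the flat metric on $N_i$ at the origin versus the given metric at $\hat x_i$). By Remark \ref{rk1}, under this identification $J^{2,\pm}\bar u_i(0) = J^{2,\pm}u_i(\hat x_i)$, and passing to limit points gives $\bar J^{2,+}\bar u_i(0) = \bar J^{2,+}u_i(\hat x_i)$; so $X_i \in \mathcal S^2 T^*_{\hat x_i} M_i$ and $(D_{x_i}\varphi(\hat x,\hat y), X_i) \in \bar J^{2,+}u_i(\hat x_i)$. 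By formula (\ref{eq0.6}), since we are at the origin in normal coordinates the connection terms in $D^2(\varphi \circ \phi)$ drop out, so $D^2\bar\varphi(0,0)$ coincides, under $d(\phi_1\times\phi_2)|_0$, with the intrinsic Hessian $A = D^2\varphi(\hat x,\hat y)$; likewise $D_{y_i}\bar\varphi(0,0)$ corresponds to $D_{x_i}\varphi(\hat x,\hat y)$. Because $d\phi_i|_0$ is a linear isometry, the operator norm (\ref{eq12.5}) is preserved, $\|\bar A\| = \|A\|$, and $I$ goes to $I$, so the matrix inequality (\ref{eq12}) holds as stated.

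**Main obstacle.** The only real subtlety is bookkeeping: making sure that the identification $T_0(T_{\hat x_i}M_i) \cong T_{\hat x_i}M_i$ via $d\exp|_0$ is simultaneously compatible with (a) the semijet comparison of Remark \ref{rk1}, (b) the identity (\ref{eq0.6}) for Hessians at the origin of normal coordinates, and (c) the isometry property needed to preserve $\|A\|$ and to carry $I$ to $I$. Once one checks that all three use the same canonical linear isomorphism, the Euclidean theorem on sums applies verbatim and there is nothing further to do — no curvature hypothesis is needed here because everything is evaluated at a single point. I would take care to note that the $o(|y|^2)$-type expansions defining $J^{2,+}$ are coordinate-free precisely because $d\exp|_0$ is an isometry, so the quadratic form $X(y,y)$ and the error term transform correctly.
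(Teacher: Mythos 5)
Your proof is correct and takes exactly the approach the paper indicates: pull back through $\exp_{\hat x_i}$ to Euclidean space, apply Theorem~3.2 of \cite{cil} at the origin, and translate back via Remark~\ref{rk1} and formula~(\ref{eq0.6}), using that $d\exp|_0$ is an isometry. The paper only states the one-line reduction; your write-up fills in the bookkeeping (local maximum pulls back, semijets and Hessians agree at the origin of normal coordinates, $\|A\|$ and $I$ are preserved) that the paper leaves implicit.
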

Now we are in a position to prove our comparison theorem.
\begin{theorem}\label{thmDiri}
Let $M$  be a  compact Riemannian manifold with or without  boundary
$\partial M$, $\mathcal{F}(M)\in C(F,\rr)$  a proper function
satisfying
\begin{equation}\label{eq12.6}
    \beta(r-s)\le F(x,r,p,X)-F(x,s,p,X) \textrm{ for } r\ge s,
\end{equation}
for some positive constant $\beta$ and the following condition (H):
\begin{quote}
there exists a function  $\omega:[0,\infty]\to [0,\infty]$ satisfying $\omega(t)>0$ when $ t>0,$ and
$\omega(0+)=0$ such that
\begin{equation}\label{eq12.7}
F(y,r, -\alpha l\iota(\gamma'(l)),X_{2})-F(x,r, -\alpha
l\iota(\gamma'(0)),X_{1})\le \omega(\alpha\dist(x,y)^2+\dist(x,y))
\end{equation}
 for $ X_1\in \mathcal{S}
T_{{x}}^*M$ and $X_2\in \mathcal{S}^2T_{{y}}^*M$,  satisfying
$X_1\le X_2\circ P_{\gamma}(l)$. Here $\iota$ is the dual map
between the tangent and cotangent bundles.
\end{quote}

 Let $u_1\in \USC(\bar
{M})$ and $u_2\in \LSC(\bar {M})$ be a subsolution and supersolution
of $F=0$ respectively.  Then $u_1-u_2$ cannot achieve a positive
local maximum at any interior point. In particular if $M$  is a
compact Riemannian manifold with boundary $\partial M$ and if
$u_1\le u_2$ on $\partial M$, then $u_1\le u_2$ on $\bar{ M}$.

\end{theorem}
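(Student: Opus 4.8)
The plan is to carry out the doubling-of-variables argument of Section~3 of \cite{cil}, with the Hessian comparison estimate (\ref{eq8}) of Theorem~\ref{thmhess} replacing the elementary identity for $D^2\bigl(\tfrac12|x-y|^2\bigr)$ used there. First I reduce the ``in particular'' clause to the main one: if $u_1\le u_2$ on $\partial M$ but $m:=\sup_{\bar M}(u_1-u_2)>0$, then, $\bar M$ being compact and $u_1-u_2$ upper semicontinuous, the supremum is attained; since $u_1-u_2\le0$ on $\partial M$ it is attained at an interior point, which is then a positive interior local maximum. So I assume, for contradiction, that $u_1-u_2$ attains a positive local maximum $m>0$ at an interior point $z_0$. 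As $M$ is compact, its sectional curvature is bounded below, say by $-\kappa^2$, and its injectivity radius is positive; I fix a geodesic ball $B=B_\rho(z_0)$ with $\bar B$ in the interior of $M$ and $\rho$ so small that $\varphi(x,y):=\dist(x,y)^2$ is smooth on $\bar B\times\bar B$ and $u_1-u_2\le m$ on $\bar B$. Subtracting the smooth penalty $\dist(\cdot,z_0)^4$ from $u_1$ (its first and second derivatives at $x$ are $O(\dist(x,z_0)^3)$ and $O(\dist(x,z_0)^2)$, hence contribute only negligible terms below), I may also assume $u_1-u_2=m$ on $\bar B$ only at $z_0$.

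For $\alpha>0$ put $\varphi_\alpha=\tfrac{\alpha}{2}\varphi$ and let $(x_\alpha,y_\alpha)\in\bar B\times\bar B$ maximize $u_1(x)-u_2(y)-\varphi_\alpha(x,y)$ (the maximum exists by compactness and upper semicontinuity). By the first lemma of this section (the manifold form of Lemma~3.1 of \cite{cil}), $\alpha\dist(x_\alpha,y_\alpha)^2\to0$, $x_\alpha,y_\alpha\to z_0$, and $u_1(x_\alpha)-u_2(y_\alpha)\to m>0$; in particular, for $\alpha$ large, $x_\alpha,y_\alpha\in B$, $\dist(x_\alpha,y_\alpha)<\min\{i(x_\alpha),i(y_\alpha)\}$, and $u_1(x_\alpha)>u_2(y_\alpha)$. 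Applying the second lemma of this section (the Crandall--Ishii lemma on manifolds) at $(x_\alpha,y_\alpha)$ with this $\varphi_\alpha$, for each $\varepsilon>0$ there are $X_1\in\mathcal S^2T^*_{x_\alpha}M$ and $X_2\in\mathcal S^2T^*_{y_\alpha}M$ with $(p_\alpha,X_1)\in\bar J^{2,+}u_1(x_\alpha)$, $(q_\alpha,X_2)\in\bar J^{2,-}u_2(y_\alpha)$, and
\begin{equation*}
\begin{pmatrix}X_1&0\\0&-X_2\end{pmatrix}\le A+\varepsilon A^2,\qquad A=D^2\varphi_\alpha(x_\alpha,y_\alpha).
\end{equation*}
Writing $l=\dist(x_\alpha,y_\alpha)$ and letting $\gamma$ be the minimizing unit-speed geodesic from $x_\alpha$ to $y_\alpha$, the first variation formula (\ref{eq5}) gives $p_\alpha=-\alpha l\,\iota(\gamma'(0))$ and $q_\alpha=-\alpha l\,\iota(\gamma'(l))$ --- exactly the covectors appearing in condition~(H) --- with $q_\alpha\circ P_\gamma(l)=p_\alpha$ since $P_\gamma(l)$ is an isometry.

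The crucial step is to test the matrix inequality against vectors $(\xi,P_\gamma(l)\xi)$ with $\xi\in T_{x_\alpha}M$. A direct computation shows $D^2\varphi$ vanishes on the direction $(\gamma'(0),\gamma'(l))$ and has no cross term with the directions orthogonal to $\gamma'$ (because $\dist(\gamma(t),\gamma(l+t))\equiv l$ and $\gamma$ is a geodesic), so by (\ref{eq8}) of Theorem~\ref{thmhess},
\begin{equation*}
D^2\varphi\bigl((\xi,P_\gamma(l)\xi),(\xi,P_\gamma(l)\xi)\bigr)\le4\kappa l\tanh\tfrac{\kappa l}{2}\,|\xi|^2\le2\kappa^2l^2|\xi|^2 .
\end{equation*}
Hence the $A$-term is at most $\kappa^2\alpha l^2|\xi|^2$ and the $\varepsilon A^2$-term at most $2\varepsilon\|A\|^2|\xi|^2$; choosing $\varepsilon=\varepsilon_\alpha$ with $\varepsilon_\alpha\|A\|^2\to0$ and using $\alpha l^2\to0$ gives $X_1\le X_2\circ P_\gamma(l)+\theta_\alpha\,\mathrm{Id}$ with $\theta_\alpha\to0^{+}$. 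Up to this vanishing matrix error $\theta_\alpha\,\mathrm{Id}$ (absorbed in the limit just as the $\varepsilon A^2$ term is in \cite{cil}), condition~(H) applies at $r=u_2(y_\alpha)$ and gives $F(y_\alpha,r,q_\alpha,X_2)-F(x_\alpha,r,p_\alpha,X_1)\le\omega(\alpha l^2+l)+o(1)$. Combining this with the subsolution inequality $F(x_\alpha,u_1(x_\alpha),p_\alpha,X_1)\le0$, the supersolution inequality $F(y_\alpha,u_2(y_\alpha),q_\alpha,X_2)\ge0$, and the strict monotonicity (\ref{eq12.6}) (legitimate since $u_1(x_\alpha)\ge u_2(y_\alpha)$), I obtain
\begin{equation*}
\beta\bigl(u_1(x_\alpha)-u_2(y_\alpha)\bigr)\le\omega\bigl(\alpha l^2+l\bigr)+o(1)\qquad(\alpha\to\infty).
\end{equation*}
Letting $\alpha\to\infty$, the left side tends to $\beta m>0$ while the right tends to $\omega(0+)=0$, a contradiction.

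I expect the Hessian estimate to be the main obstacle. In $\rr^n$ the Hessian of $\tfrac12|x-y|^2$ is the fixed nonnegative matrix $\bigl(\begin{smallmatrix}I&-I\\-I&I\end{smallmatrix}\bigr)$, so $\mathrm{diag}(X_1,-X_2)\le A$ forces $X_1\le X_2$ \emph{exactly}; on a curved manifold $D^2\bigl(\tfrac{\alpha}{2}\dist^2\bigr)$ has a genuinely positive component, of size of order $\alpha\kappa^2\dist^2$, in the ``diagonal'' directions $(\xi,P_\gamma(l)\xi)$, and Theorem~\ref{thmhess} is precisely what shows this component is dominated by $\alpha\dist^2$ --- which vanishes along the maximizing sequence because $\alpha\dist(x_\alpha,y_\alpha)^2\to0$. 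The remaining work is bookkeeping paralleling \cite{cil}: the penalty $\dist(\cdot,z_0)^4$ keeps $(x_\alpha,y_\alpha)$ interior so the Crandall--Ishii lemma applies without boundary terms; $\varepsilon_\alpha$ is chosen so $\varepsilon_\alpha A^2$ is negligible; and the residual matrix error $\theta_\alpha\,\mathrm{Id}$ together with the $o(1)$ penalty perturbations are absorbed into the modulus in the final limit, with the parallel transport $P_\gamma(l)$ inserted wherever the Euclidean proof tacitly identifies $T_{x_\alpha}M$ with $T_{y_\alpha}M$.
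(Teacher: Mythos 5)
Your strategy — doubling the variables, applying the sharp Hessian estimate (\ref{eq8}) of Theorem~\ref{thmhess} to the test pair $(\xi,P_\gamma(l)\xi)$, invoking the manifold Crandall--Ishii lemma, and finishing with condition~(H) together with the strict properness (\ref{eq12.6}) — is exactly the paper's. The chain of estimates
$D^2\varphi\bigl((\xi,P_\gamma(l)\xi),(\xi,P_\gamma(l)\xi)\bigr)\le4\kappa l\tanh\frac{\kappa l}{2}|\xi|^2$,
the choice $\varepsilon_\alpha\|A\|^2\to0$, and the residual $\theta_\alpha\,\mathrm{Id}$ with $\theta_\alpha=O(\alpha l_\alpha^2)\to0$ all match the paper's computation, as does the trick of moving $\theta_\alpha\,\mathrm{Id}$ into $X_2$ and appealing to continuity of $F$ in the matrix argument.

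Where you deviate is the localization. The paper assumes $\sup_M(u_1-u_2)>0$ and works directly with the global maximizing sequence on $\bar M\times\bar M$, with no penalty; you instead start from a positive \emph{local} maximum at $z_0$, restrict to a ball $B$, and subtract $g(x)=\dist(x,z_0)^4$ from $u_1$ to force the maximizer in $\bar B$ to be uniquely at $z_0$. This is a reasonable and in some sense more faithful reading of the theorem's conclusion (the paper's own proof is really about global maxima and relies on the Dirichlet boundary condition, or on compactness without boundary, to get an interior maximizer). However, it creates a genuine gap that you should not wave away as ``negligible bookkeeping.'' After subtracting $g$, the Crandall--Ishii lemma applied to $u_1-g$ produces the covector $p_\alpha=-\alpha l_\alpha\iota(\gamma'(0))$, but the element of $\bar J^{2,+}u_1(x_\alpha)$ that you need in order to invoke the \emph{subsolution} inequality for $u_1$ has first-order part $p_\alpha+Dg(x_\alpha)$, with $Dg(x_\alpha)=O(\dist(x_\alpha,z_0)^3)$, and second-order part shifted by $D^2g(x_\alpha)$. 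Condition~(H) controls the $F$-difference only at the exact covectors $-\alpha l\iota(\gamma'(0))$ and $-\alpha l\iota(\gamma'(l))$; it supplies no modulus of continuity of $F$ in the gradient slot. Your ``$+o(1)$'' therefore implicitly requires
\[
F\bigl(x_\alpha,u_2(y_\alpha),p_\alpha+Dg(x_\alpha),\hat X_1\bigr)-F\bigl(x_\alpha,u_2(y_\alpha),p_\alpha,\hat X_1\bigr)\longrightarrow 0,
\]
which does not follow from mere continuity of $F$: along the maximizing sequence $p_\alpha$ (of size $\alpha l_\alpha$) and $\hat X_1$ (of size $O(\alpha)$) need not stay in a compact set, so local uniform continuity of $F$ is unavailable. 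The paper avoids this entirely by never introducing a penalty, so that $p_\alpha$ is precisely the covector allowed by (H). To close the gap you would need either an additional hypothesis (e.g.\ a modulus of continuity for $F$ in $p$, locally uniform in $x,r$ but uniform in $X$), or a reformulation of (H) that tolerates a vanishing perturbation of the gradient — or simply to drop the penalty and argue, as the paper does, with the global sup (restricting further if necessary so that the maximizer is interior, which is the case under the Dirichlet hypothesis of the ``in particular'' clause). As written, the quartic-penalty step is the one place where your argument is not justified by the hypotheses of the theorem.
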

\begin{proof}For the sake of a contradiction we assume that
$2\delta:=\sup_{x\in M}\{u_1(x)-u_2(x)\}>0$. Then for $\alpha$
large,
\begin{equation*}
    0<2\delta\le\mu_{\alpha}:=\sup\{u_1(x)-u_2(y)-\frac{\alpha}2\dist(x,y)^2), x,y\in M
    \}<+\infty.
\end{equation*}
 There exists $(x_{\alpha},y_{\alpha})$ such that
\begin{equation*}
    \mu_{\alpha}-(u_1(x_{\alpha})-u_2(y_{\alpha})-\frac{\alpha}2\dist(x_{\alpha},y_{\alpha})^2)=0.
\end{equation*}
Then the following holds:
\begin{equation*}
    \left\{
      \begin{array}{ll}
        (i)\quad\lim_{\alpha\to\infty}\alpha\dist(x_{\alpha},y_{\alpha})^2=0,\quad and & \hbox{} \\
        (ii)\quad \lim_{\alpha\to\infty}\mu_{\alpha}=u_1(x_0)-u_2(x_0)=2\delta, & \hbox{} \\
         \textrm{ where } x_0=\lim_{\alpha\to\infty}x_{\alpha}. & \hbox{}
      \end{array}
    \right.
\end{equation*}
We can suppose that there exists a convex neighborhood $D$ of $x_0$
such that $x_{\alpha}\in D$ and $y_{\alpha}\in D$ for all $\alpha$
large.  So there exists a constant $\kappa$ such that the sectional
curvature of $M$ is bounded below by a constant $-\kappa^2$. Without
loss of generality, we assume the diameter $L(D)$ of D is small such
that $\cosh \kappa L(D)\le 2$. Let
$\gamma_{\alpha}:[0,l_{\alpha}]\to D$ be the unique normal geodesic
joining $x_{\alpha}$ and $y_{\alpha}$, where $l_{\alpha}=\dist
(x_{\alpha}, y_{\alpha})$. In this case
\begin{equation*}
    \nabla\varphi=(-2l\gamma'(0),2l\gamma'(l_{\alpha})).
\end{equation*}
We can decompose $T_xM\times T_yM$  as
$(\gamma'(0)\rr\oplus\{\gamma'(0)\}^{\bot})\times(\gamma'(0)\rr\oplus\{\gamma'(l_{\alpha})\}^{\bot})$.
According this decomposition and from Theorem \ref{thmhess}, we know
$A_{\alpha}:=D^2(\frac{\alpha}2\dist^2)$ satisfies
\begin{equation*}
    A_{\alpha}\le \alpha \left(
       \begin{array}{cccc}
         1&&-1&\\
         &Il_{\alpha}\kappa\coth \kappa l_{\alpha} & &-\frac{l_{\alpha}\kappa}{\sinh \kappa l_{\alpha}} I\\
         -1&&1&\\
         &-\frac{l_{\alpha}\kappa}{\sinh \kappa l_{\alpha}}I && Il_{\alpha}\kappa\coth \kappa l_{\alpha} \\
       \end{array}
     \right),
\end{equation*}
where $I$ is $(n-1)\times (n-1)$ unit matrix.
 Then
 \begin{equation*}
 \|A_{\alpha}\|\le \alpha\frac{\kappa l_{\alpha}}{\sinh \kappa l_{\alpha}}(\cosh kl_{\alpha}+1)\le 3\alpha,
 \end{equation*}
Here we have assumed that $\kappa l_\alpha$ is not big.
  For any $\varepsilon>0$ there
exist  $X_{1\alpha}\in \mathcal{S}^2T_{x_{\alpha}}^*M$ and
$X_{2\alpha}\in \mathcal{S}^2T_{y_{\alpha}}^*M$ such that
 the block diagonal matrix satisfies
\begin{equation}\label{eq13}
    -\left(\frac1{\varepsilon}+\|A_{\alpha}\|\right)I\le\left(
                                               \begin{array}{cc}
                                                 X_{1\alpha} & 0 \\
                                                 0 & -X_{2\alpha} \\
                                               \end{array}
                                             \right)
    \le A_{\alpha}+\varepsilon A^2_{\alpha}.
\end{equation}
Choosing $\varepsilon=\frac{1}{\alpha(\cosh \kappa L(D)+1)}$ we have
the following
\begin{equation}\label{eq14}
    -4\alpha I\le\left(
                                               \begin{array}{cc}
                                                 X_{1\alpha} & 0 \\
                                                 0 & -X_{2\alpha} \\
                                               \end{array}
                                             \right)
    \le 4\alpha \left(
       \begin{array}{cccc}
         1&&-1&\\
         &Il_{\alpha}\kappa\coth \kappa l_{\alpha} & &-\frac{l_{\alpha}\kappa}{\sinh \kappa l_{\alpha}} I\\
         -1&&1&\\
         &-\frac{l_{\alpha}\kappa}{\sinh \kappa l_{\alpha}}I && Il_{\alpha}\kappa\coth \kappa l_{\alpha} \\
       \end{array}
     \right),
\end{equation}
Let us denote $\iota$ the dual map between the tangent and cotangent
bundles. We have proved that for a sufficiently large $\alpha$ at a
maximum $(x_{\alpha},y_{\alpha})$ of
$u_1(x)-u_2(y)-\frac{\alpha}2\dist(x_{\alpha},y_{\alpha})^2$ there
exist  $(-\alpha l_\alpha\iota(\gamma_{\alpha}'(0)), X_{1\alpha})\in
\bar{J}_M^{2,+}(u_1(x_{\alpha})$ and $(-\alpha
l_\alpha\iota(\gamma_{\alpha}'(l_{\alpha})), X_{2\alpha})\in
\bar{J}_M^{2,-}(u_2(y_{\alpha})$ such that $X_{2\alpha}\ge
X_{1\alpha}\circ P_{\gamma}(l)- 4\alpha
                      l_{\alpha}^2\kappa^2 Id$ holds.

Since $u_1$ and $u_2$ are subsoltion and supersoltion  we have
\begin{equation}\label{eq15}
    F(x_{\alpha},u_1(x_{\alpha}),
    -\alpha l_\alpha\iota(\gamma'_{\alpha}(0)),X_{1\alpha})\le 0\le F(y_{\alpha},u_2(y_{\alpha}),
    -\alpha l_\alpha\iota(\gamma'_{\alpha}(l_{\alpha})),X_{2\alpha}).
\end{equation}
Then for $\alpha$ sufficiently large
\begin{equation*}
    \begin{split}
                    0<\beta\delta & <\beta(u_1(x_{\alpha})-u_2(y_{\alpha})-\frac{\alpha}2\dist^2(x_{\alpha},y_{\alpha})) \\
                      &\le F(x_{\alpha},u_1(x_{\alpha}),-\alpha l_\alpha\iota(\gamma'_{\alpha}(0)),X_{1\alpha})-F(x_{\alpha},u_2(y_{\alpha}),-\alpha l_\alpha\iota(\gamma'_{\alpha}(0)),X_{1\alpha})-\frac{\beta\alpha}2\dist^2(x_{\alpha},y_{\alpha})\\
                      &=F(x_{\alpha},u_1(x_{\alpha}),-\alpha l_\alpha\iota(\gamma'_{\alpha}(0)),X_{1\alpha})-F(y_{\alpha},u_2(y_{\alpha}),-\alpha l_\alpha\iota(\gamma'_{\alpha}(l_{\alpha})),X_{1\alpha}\circ P_{\gamma}(0))+\\
                      &\qquad F(y_{\alpha},u_2(y_{\alpha}),-\alpha l_\alpha\iota(\gamma'_{\alpha}(l_{\alpha})),X_{2\alpha})-F(y_{\alpha},u_2(y_{\alpha}),-\alpha l_\alpha\iota(\gamma'_{\alpha}(l_{\alpha})),X_{2\alpha})+\\
                      &\qquad F(y_{\alpha},u_2(y_{\alpha}),-\alpha l_\alpha\iota(\gamma'_{\alpha}(l_{\alpha})),X_{1\alpha}\circ P_{\gamma}(0))-F(x_{\alpha},u_2(y_{\alpha}),-\alpha l_\alpha\iota(\gamma'_{\alpha}(0)),X_{1\alpha})-\frac{\beta\alpha}2\dist^2(x_{\alpha},y_{\alpha})\\
                      &\le\omega(\alpha\dist(x_{\alpha},y_{\alpha})^2+\dist(x_{\alpha},y_{\alpha}))-\frac{\beta\alpha}2\dist^2(x_{\alpha},y_{\alpha})+\\
                      &\qquad
                      -F(y_{\alpha},u_2(y_{\alpha}),-\alpha l_\alpha\iota(\gamma'_{\alpha}(l_{\alpha})),X_{1\alpha}\circ P_{\gamma}(0))+F(y_{\alpha},u_2(y_{\alpha}),-\alpha l_\alpha\iota(\gamma'_{\alpha}(l_{\alpha})),X_{2\alpha})\\
                      &\le\omega(\alpha\dist(x_{\alpha},y_{\alpha})^2+\dist(x_{\alpha},y_{\alpha}))-\frac{\beta\alpha}2\dist^2(x_{\alpha},y_{\alpha})+\\
                      &\qquad
                      -F(y_{\alpha},u_2(y_{\alpha}),-\alpha l_\alpha\iota(\gamma'_{\alpha}(l_{\alpha})),X_{2\alpha}+4\alpha
                      l_{\alpha}^2\kappa^2I)+F(y_{\alpha},u_2(y_{\alpha}),-\alpha l_\alpha\iota(\gamma'_{\alpha}(l_{\alpha})),X_{2\alpha}).
                 \end{split}
\end{equation*}
Let $\alpha\to +\infty$. Since $F$ is continuous in $X$, the left
hand side goes to zero and we have arrived at a contradiction.
\end{proof}

\section{Viscosity maximum principle on complete  Riemannian manifolds}
Maximum principles for $C^2$ functions on complete Riemannian
manifolds were already done by Omori \cite{O} and Yau \cite{Y}, but
our approach and the spirit of the results are quite different. In
\cite{Y}, Yau developed gradient estimate for $C^2$ functions
satisfing some inequalities and proved a maximum principle which is
well known and has many important applications in geometry. In this
section we will generalize both Omori and Yau's maximum principle to
non-differentiable functions. The approach is quite different. Even
for  $C^2$ function we present here a new proof for Omori-Yau's
maximum principle.
\begin{theorem}\label{thmOmori}
Let $M$ be a complete Riemannian manifold with sectional curvature
bounded below by a constant $-\kappa^2$. Let $u
\in{\mathrm{USC}}(M)$, and  $v\in{\mathrm{LSC}}(M)$ be two functions
satisfying
 \begin{equation}\label{}
    \mu_0:=\sup_{x\in M}[u(x)-v(x)]< +\infty.
 \end{equation}
 Assume that $u$ and $v$ are bounded from above and below respectively and there exists a  function  $\omega:[0,\infty]\to [0,\infty]$ satisfying $\omega(t)>0$ when $ t>0,$ and
$\omega(0+)=0$  such that
\begin{equation}\label{}
    u(x)-u(y)\le \omega(\dist(x,y)).
\end{equation}
Then for each $\varepsilon>0$, there exist $x_{\varepsilon},
y_{\varepsilon}\in M$, such that
$(p_{\varepsilon},X_{\varepsilon})\in \bar{J}%
^{2,+}u(x_{\varepsilon}),\  \ (q_{\varepsilon},Y_{\varepsilon})\in
\bar{J} ^{2,-}v(y_{\varepsilon}),$ such that
\[
u(x_{\varepsilon})-v(y_{\varepsilon})\geq \mu_{0}-\varepsilon,\
\]
and such that
\[
\dist(x_{\varepsilon},y_{\varepsilon})<\varepsilon,\  \  \
|p_{\varepsilon }-q_{\varepsilon}\circ P_{\gamma}(l)|<\varepsilon,\
\ X_{\varepsilon}\leq Y_{\varepsilon }\circ
P_{\gamma}(l)+\varepsilon  P_{\gamma}(l),\
\]
where $l=\dist(x_{\varepsilon},y_{\varepsilon})$ and $
P_{\gamma}(l)$ is the parallel transport along the shortest geodesic
connecting $x_{\varepsilon}$ and $y_{\varepsilon}$.

\end{theorem}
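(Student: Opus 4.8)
The plan is to adapt the classical doubling-of-variables argument of Crandall--Ishii--Lions to the noncompact Riemannian setting, using a coercive penalty function that forces the maximizing pair into a compact region despite the absence of a limit point. Fix a reference point $o\in M$ and let $r(x)=\dist(o,x)$. For parameters $\alpha>0$ and a small $\lambda>0$ consider
\[
\Phi_{\alpha,\lambda}(x,y)=u(x)-v(y)-\frac{\alpha}{2}\dist(x,y)^2-\lambda\,\phi(r(x))-\lambda\,\phi(r(y)),
\]
where $\phi$ is a smooth, nondecreasing, convex-ish function of one variable growing to $+\infty$ (for instance $\phi(s)=\sqrt{1+s^2}$); since $u$ is bounded above and $v$ is bounded below, $\Phi_{\alpha,\lambda}$ is bounded above, is upper semicontinuous, and tends to $-\infty$ as $x$ or $y$ leaves a compact set, so it attains its supremum at some $(x_{\alpha},y_{\alpha})$ (strictly, use a small additive correction or an approximate maximizer via Ekeland if the supremum is not attained). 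First I would show that, choosing $\lambda=\lambda(\alpha)\to0$ slowly and using the hypothesis $u(x)-u(y)\le\omega(\dist(x,y))$ together with boundedness, one gets the analogue of Lemma 4.1: $\alpha\,\dist(x_{\alpha},y_{\alpha})^2\to0$, $\dist(x_{\alpha},y_{\alpha})\to0$, $\lambda\phi(r(x_{\alpha}))\to0$, and $u(x_{\alpha})-v(y_{\alpha})\to\mu_0$. The standard trick is to compare the value at $(x_{\alpha},y_{\alpha})$ with the value at $(x_{\alpha},x_{\alpha})$ and with near-maximizers of $u-v$.

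Next I would apply the Riemannian version of the Theorem-on-Sums (Lemma 4.2 in the excerpt) to the function $u_1(x)-u_2(y)$ with $u_1=u-\lambda\phi\circ r$, $u_2=v+\lambda\phi\circ r$, and test function $\varphi(x,y)=\tfrac{\alpha}{2}\dist(x,y)^2$ at the interior local maximum $(x_{\alpha},y_{\alpha})$ (for $\alpha$ large this point is away from the cut locus so $\varphi$ is $C^2$ there, and $\dist(x_{\alpha},y_{\alpha})<\min\{i(x_\alpha),i(y_\alpha)\}$ by the smallness just established). This yields $\bar X_{\alpha}\in\mathcal{S}^2T^*_{x_\alpha}M$, $\bar Y_{\alpha}\in\mathcal{S}^2T^*_{y_\alpha}M$ with $(D_x\varphi,\bar X_{\alpha})\in\bar J^{2,+}u_1(x_\alpha)$, $(-D_y\varphi,\bar Y_{\alpha})\in\bar J^{2,-}u_2(y_\alpha)$ and the matrix inequality \eqref{eq12} with $A_{\alpha}=D^2\varphi(x_\alpha,y_\alpha)$. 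Using Theorem \ref{thmhess} to bound $A_{\alpha}$ from above by the explicit hyperbolic-type matrix, and choosing $\varepsilon$ in \eqref{eq12} proportional to $1/\alpha$ exactly as in the proof of Theorem \ref{thmDiri}, the block inequality gives $\bar X_{\alpha}\le \bar Y_{\alpha}\circ P_\gamma(l)+C\alpha l_\alpha^2\kappa^2 P_\gamma(l)$, and since $\alpha l_\alpha^2\to0$ the error term is $o(1)$. Finally I would undo the penalty: $p_\varepsilon:=D_x\varphi(x_\alpha,y_\alpha)+\lambda\phi'(r(x_\alpha))dr\in$ the jet of $u$ after adding the $C^2$ function $\lambda\phi\circ r$ to both sides, and similarly for $q_\varepsilon$ and the Hessians; because $\lambda\to0$ and $\phi\circ r$ has Hessian controlled by the Hessian comparison for $r$ (again using the sectional curvature lower bound, or simply absorbing it into $\varepsilon$), the penalty contributes at most $o(1)$ to $|p_\varepsilon-q_\varepsilon\circ P_\gamma(l)|$ and to the matrix estimate. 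Also $D_x\varphi=-\alpha l_\alpha\iota(\gamma'(0))$ and $-D_y\varphi=-\alpha l_\alpha\iota(\gamma'(l))$ are parallel translates of each other up to the penalty terms, so $|p_\varepsilon-q_\varepsilon\circ P_\gamma(l)|\to0$. Taking $\alpha$ large enough that all the $o(1)$ quantities are below $\varepsilon$ produces the desired $x_\varepsilon,y_\varepsilon$.

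The main obstacle I anticipate is the bookkeeping around the penalty term $\lambda\phi\circ r$: one must choose the pair $(\alpha,\lambda)$ in the right order so that simultaneously (a) the penalty is strong enough to guarantee a maximizer exists in a compact set, (b) $\lambda\phi(r(x_\alpha))\to0$ so that $u(x_\alpha)-v(y_\alpha)\to\mu_0$, and (c) the first and second derivatives of $\lambda\phi\circ r$ at $x_\alpha$ (which involve $\phi'(r(x_\alpha))$ and the Hessian of $r$, hence a curvature bound) are $o(1)$ as we pass to the limit. A clean way to handle (c) is to note that the Hessian of $r$ away from the cut locus is bounded above in terms of $\kappa$ and a lower bound on $r$ (it blows up only near $o$), and $x_\alpha$ stays away from $o$ once $\mu_0>0$ is approached — or, if $o$ is chosen near a near-maximizer of $u-v$, one instead keeps $r(x_\alpha)$ bounded and lets $\lambda\to0$. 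A secondary technical point is justifying that the supremum of $\Phi_{\alpha,\lambda}$ is attained (or suitably approximated), which follows from upper semicontinuity plus properness of the penalty, possibly via Ekeland's variational principle to get an exact maximizer of a slightly perturbed functional without disturbing the jet estimates. Everything else is a routine transcription of the Euclidean argument with $\frac12|x-y|^2$ replaced by $\frac12\dist(x,y)^2$ and its Hessian replaced by the estimate of Theorem \ref{thmhess}.
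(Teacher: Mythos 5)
Your strategy is viable in outline, but it is genuinely different from the paper's, and the difference hides a real gap. The paper does not penalize by a fixed coercive function of $\dist(o,\cdot)$. Instead, for each $\alpha$ it first selects an \emph{approximate maximizer} $\hat{x}_\alpha$ of $u-v$ (with tolerance $\omega(\sqrt{\mu_0/\alpha})$), sets $\lambda_\alpha=-1/\ln\omega(\sqrt{\mu_0/\alpha})$, and penalizes by $\tfrac{\lambda_\alpha}{2}\dist(\hat{x}_\alpha,x)^2+\tfrac{\lambda_\alpha}{2}\dist(\hat{x}_\alpha,y)^2$; that is, the center of the penalty moves with $\alpha$. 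The explicit coupling of $\lambda_\alpha$ to $\omega$ guarantees $\dist(\hat{x}_\alpha,x_\alpha)^2+\dist(\hat{x}_\alpha,y_\alpha)^2\leq 4\omega(\sqrt{\mu_0/\alpha})/\lambda_\alpha\to 0$ (since $t\mapsto -t\ln t\to 0$), so the whole argument takes place inside a small convex geodesic ball around $\hat{x}_\alpha$ where $\dist(\hat{x}_\alpha,\cdot)^2$ is smooth and its Hessian is controlled by the classical comparison theorem near the diagonal. Part 2 is then the same Theorem-on-Sums computation you sketch.

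The gap in your version is the cut locus of the fixed base point $o$. You work with $\phi\circ r$ where $r=\dist(o,\cdot)$ and a fixed $o$, and you need both to apply the Theorem on Sums with this penalty inside the test function (or, equivalently, to ``undo the penalty'' by adding $\lambda D(\phi\circ r)$ and $\lambda D^2(\phi\circ r)$ to the jets). This requires $\phi\circ r$ to be $C^2$ in a neighborhood of the maximizer $x_\alpha$, but there is no a priori reason for $x_\alpha$ to avoid $\mathrm{Cut}(o)$; a complete noncompact manifold generally has nonempty cut locus, and $r$ is not even differentiable there. So the step ``$p_\varepsilon:=D_x\varphi+\lambda\phi'(r(x_\alpha))\,dr$'' is not defined at such a point, and your claim that the rest is ``a routine transcription'' is too strong. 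This is fillable — replace $\lambda\phi\circ r$ by a $C^2$ Calabi upper barrier at $(x_\alpha,y_\alpha)$, which preserves the local maximum because you only increase the penalty, and then apply the Theorem on Sums to the smooth barrier; the Hessian estimate you need still holds for the barrier by the lower sectional curvature bound. But it requires an explicit argument, and the paper's moving-center penalty sidesteps the issue entirely, which is precisely why it uses $\dist(\hat{x}_\alpha,\cdot)^2$ rather than a globally coercive function of $\dist(o,\cdot)$. A second, smaller point: the Ekeland detour is unnecessary — your penalty is coercive and $\Phi_{\alpha,\lambda}$ is USC, so the supremum is attained over a compact sublevel set. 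Conversely, where you are vague (``choose $\lambda(\alpha)\to0$ slowly''), the paper is concrete, and the concrete choice $\lambda_\alpha=-1/\ln\omega(\sqrt{\mu_0/\alpha})$ is exactly what makes both the localization and the vanishing of the penalty derivatives work simultaneously.
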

\begin{proof} We divide the proof into two parts.

{\bf Part 1.} Without loss of generality, we assume that $\mu_0>0$.
Otherwise we replace $u$ by $u-\mu_0+1$. For each $\alpha>0$, we
take $\hat{x}_{\alpha}\in M$ such that
\begin{equation*}
    u(\hat{x}_{\alpha})-v(\hat{x}_{\alpha})+\omega(\sqrt{\frac{\mu_0}{\alpha}})\geq
    \mu_0.
\end{equation*}
Let $\lambda_{\alpha}=-\frac{1}{\ln
\omega(\sqrt{\frac{\mu_0}{\alpha}})}$. We consider the following
maximization
\begin{equation}\label{}
    \sigma_{\alpha}=\sup_{x,y\in
    M}[u(x)-v(y)-\frac{\alpha}2\dist(x,y)^2-\frac{\lambda_{\alpha}}2\dist(\hat{x}_{\alpha},x)^2-\frac{\lambda_{\alpha}}2\dist(\hat{x}_{\alpha},y)^2]
\end{equation}
Taking $\alpha$ large if necessary, we have $(x_{\alpha},
y_{\alpha})$ satisfying
\begin{equation*}
    \sigma_{\alpha}=u(x_{\alpha})-v(y_{\alpha})-\frac{\alpha}2\dist(x_{\alpha},y_{\alpha})^2-\frac{\lambda_{\alpha}}2\dist(\hat{x}_{\alpha},x_{\alpha})^2-\frac{\lambda_{\alpha}}2\dist(\hat{x}_{\alpha},y_{\alpha})^2
\end{equation*}
Let
\begin{equation}\label{}
    \sigma^o_{\alpha}=\sup_{x,y\in
    M}[u(x)-v(x)-\lambda_{\alpha}\dist(\hat{x}_{\alpha},x)^2]
\end{equation}
It is straightforward to see that
\begin{equation}\label{}
    \sigma_{\alpha}\ge\sigma^o_{\alpha}\ge u(\hat{x}_{\alpha})-v(\hat{x}_{\alpha})\geq
    \mu_0-\omega(\sqrt{\frac{\mu_0}{\alpha}})
\end{equation}
and it follows from the boundedness of $u$ and $v$  that
$\sigma_{\alpha}$ is bounded  from below by some constant
$\sigma_{*}\leq \sigma^o_{\alpha}$. Then there exists a constant $C$
such that
\begin{equation}\label{eqt5}
\frac{\alpha}2\dist(x_{\alpha},y_{\alpha})^2\le C\end{equation} and
\begin{equation}\label{eqt6}
\begin{split}
                          \frac{\alpha}2\dist(x_{\alpha},y_{\alpha})^2+&\frac{\lambda_{\alpha}}2\dist(\hat{x}_{\alpha},x_{\alpha})^2+\frac{\lambda_{\alpha}}2\dist(\hat{x}_{\alpha},y_{\alpha})^2=  u(x_{\alpha})-v(y_{\alpha})-\sigma_{\alpha} \\
                            & \le
                            u(x_{\alpha})-u(y_{\alpha})+u(y_{\alpha})-v(y_{\alpha})-\mu_0+\omega(\sqrt{\frac{\mu_0}{\alpha}})\\
                            & \le
                            \omega(\dist (x_{\alpha},y_{\alpha}) )+u(y_{\alpha})-v(y_{\alpha})-\mu_0+\omega(\sqrt{\frac{\mu_0}{\alpha}})\\
                            & \le
                            \omega(\dist (x_{\alpha},y_{\alpha}))+\omega(\sqrt{\frac{\mu_0}{\alpha}})\rightarrow
                            0.
                        \end{split}
\end{equation}
So (\ref{eqt5}) can be improved as
$\frac{\alpha}2\dist(x_{\alpha},y_{\alpha})^2\le \frac{\mu_0}2$.
Using the same process of (\ref{eqt6}), we have
\begin{equation*}
    \frac{\alpha}2\dist(x_{\alpha},y_{\alpha})^2+\frac{\lambda_{\alpha}}2\dist(\hat{x}_{\alpha},x_{\alpha})^2+\frac{\lambda_{\alpha}}2\dist(\hat{x}_{\alpha},y_{\alpha})^2\le
    2\omega(\sqrt{\frac{\mu_0}{\alpha}}).
\end{equation*}

Thus, when $\alpha \rightarrow \infty$,
\begin{equation}\label{}
\begin{split}
&\dist(x_{\alpha},y_{\alpha})^{2}\leq
\frac{4\omega(\sqrt{\frac{\mu_{0} }{\alpha}})}{\alpha}\rightarrow0,\\
&\dist(x_{\alpha},\hat{x}_{\alpha})^{2}+\dist(y_{\alpha
},\hat{x}_{\alpha})^{2}\leq
\frac{4\omega(\sqrt{\frac{\mu_{0}}{\alpha}} )}{ \lambda_{\alpha}}
\rightarrow0.
  \end{split}
\end{equation}

{\bf Part 2.} We apply now Theorem 3.2 in \cite{cil} to
$\varphi_{\alpha}(x,y)=\frac{\alpha}2\dist(x,y)^2+\frac{\lambda_{\alpha}}2\dist(\hat{x}_{\alpha},x)^2+\frac{\lambda_{\alpha}}2\dist(\hat{x}_{\alpha},y)^2$.
We have for any $\delta>0$ there exist $X_{\alpha}\in \mathcal{S}
T_{{x_{\alpha}}}^{\ast}M$ and $Y_{\alpha}\in \mathcal{S}
T_{y_{\alpha}}^{\ast}M$ such that
\[
(D_{x}\varphi_{\alpha}({x_{\alpha}},{y_{\alpha}}),X_{\alpha})\in
\bar{J}^{2,+}u(x_{\alpha}),\text{ and }
(-D_{y}\varphi_{\alpha}({x_{\alpha}},{y_{\alpha}}),Y_{\alpha})\in
\bar{J}^{2,-}v(y_{\alpha}),
\]
and the block diagonal matrix satisfies
\begin{equation}
-\left(  \frac{1}{\delta}+\Vert A_{\alpha}\Vert \right)  I\leq
\left(
\begin{array}
[c]{cc}%
X_{\alpha} & 0\\
0 & -Y_{\alpha}%
\end{array}
\right)  \leq A_{\alpha}+\delta A^{2}_{\alpha},\label{eqt12}%
\end{equation}
where
$A_{\alpha}=D^{2}\varphi_{\alpha}({x}_{\alpha},{y}_{\alpha})\in
\mathcal{S}^2T^{\ast}(M\times M)$ and
\begin{equation}
\Vert A_{\alpha}\Vert=\sup \{|A_{\alpha}(\xi,\xi)|,\xi \in
T_{({x_{\alpha}},{y_{\alpha}})}M\times
M,|\xi|=1\}.\label{eqt12.5}%
\end{equation}
We denote
\[
P_{\alpha}=\frac{\alpha}2D^{2}\dist(x_{\alpha},y_{\alpha})^2, \text{
and
}Q_{\alpha}=\frac{\lambda_{\alpha}}2D^{2}[\dist(\hat{x}_{\alpha},x_{\alpha})^2+\dist(\hat{x}_{\alpha},y_{\alpha})^2].
\]
Hence $A_{\alpha}=P_{\alpha}+Q_{\alpha}$.  Since $\lim_{\alpha\to
+\infty}x_{\alpha}=\lim_{\alpha\to
+\infty}y_{\alpha}=\lim_{\alpha\to +\infty}\hat{x}_{\alpha}$, we can
assume $\alpha$ large enough so that all $x_{\alpha}$ and
$y_{\alpha}$ are in a small convex neighborhood $D_{\alpha}$ of
$\hat{x}_{\alpha}$. Let $\gamma$ be the minimal geodesic connecting
$x_{\alpha}$ and $y_{\alpha}$. We decompose $T_{x_{\alpha}}M\times
T_{y_{\alpha}}M$ as
$(\gamma'(0)\rr\oplus\{\gamma'(0)\}^{\bot})\times(\gamma'(0)\rr\oplus\{\gamma'(l_{\alpha})\}^{\bot})$.
According this decomposition and from Theorem \ref{thmhess}, we know
$P_{\alpha}$ satisfies
\begin{equation*}
    P_{\alpha}\le \alpha \left(
       \begin{array}{cccc}
         1&&-1&\\
         &Il_{\alpha}\kappa\coth \kappa l_{\alpha} & &-\frac{l_{\alpha}\kappa}{\sinh \kappa l_{\alpha}} I\\
         -1&&1&\\
         &-\frac{l_{\alpha}\kappa}{\sinh \kappa l_{\alpha}}I && Il_{\alpha}\kappa\coth \kappa l_{\alpha} \\
       \end{array}
     \right),
\end{equation*}
where $I$ is $(n-1)\times (n-1)$ unit matrix.
 Then
 \begin{equation*}
 \|P_{\alpha}\|\le \alpha\frac{\kappa l_{\alpha}}{\sinh \kappa
l_{\alpha}}(\cosh \kappa l_{\alpha}+1).
 \end{equation*}
We decompose $T_{x_{\alpha}}M\times T_{y_{\alpha}}M$ as $(\nabla
\dist(\hat{x}_{\alpha},\cdot)\rr\oplus\{\nabla
\dist(\hat{x}_{\alpha},\cdot)\}^{\bot})\times(\nabla
\dist(\hat{x}_{\alpha},\cdot)\rr\oplus\{\nabla
\dist(\hat{x}_{\alpha},\cdot)\}^{\bot})$. According this
decomposition, the classical Hessian comparison Theorem implies that
$Q_{\alpha}$ satisfies
\begin{equation*}
    Q_{\alpha}\le \lambda_{\alpha} \left(
       \begin{array}{cccc}
         1&&&\\
         &Il_{\alpha}\kappa\coth \kappa l_{\alpha} & &\\
         &&1&\\
         & && Il_{\alpha}\kappa\coth \kappa l_{\alpha} \\
       \end{array}
     \right).
\end{equation*}

 Since the $\dist(x_{\alpha}, y_{\alpha})$, $\dist(\hat{x}_{\alpha},x_{\alpha})$, and $\dist(\hat{x}_{\alpha},y_{\alpha})$ tend to zero as $\alpha\to +\infty$,
 we can assume the diameter of the  convex neighborhood $D_{\alpha}$ is small so that
  both $\Vert P_{\alpha}\Vert$ and $\Vert
Q_{\alpha}\Vert$ are bounded by $4\alpha$ and $2\lambda_{\alpha}$
respectively.

So for any $V\in T_{x_{\alpha}}M$ and $V\bot \gamma'$,
\begin{equation*}\begin{split}
                   \langle X_{\alpha}V,V\rangle-\langle
    Y_{\alpha}P_{\gamma}(l_{\alpha})V,P_{\gamma}(l_{\alpha})V\rangle &
    \le (P_\alpha+Q_\alpha+\delta P_\alpha^2+\delta Q_\alpha^2+\delta P_\alpha
    Q_\alpha+\delta Q_\alpha P_\alpha)(V, P_\gamma(l_\alpha)V)^2\\
                         &\le P_\alpha (V, P_\gamma(l_\alpha)V)^2+[\|Q_\alpha\|+2\delta(\|P_\alpha^2\|+\|Q_\alpha^2\|](V, P_\gamma(l_\alpha)V)^2 .
                 \end{split}
\end{equation*}
Since $\|P_\alpha\|\le 4\alpha, $  $\|Q_\alpha\|\le
2\lambda_\alpha$, we have
\begin{equation*}
    \langle X_{\alpha}V,V\rangle-\langle
    Y_{\alpha}P_{\gamma}(l_{\alpha})V,P_{\gamma}(l_{\alpha})V\rangle
    \le 2\kappa^2\alpha
    l_\alpha^2|V|^2+4\lambda_\alpha|V|^2+4\delta(16\alpha^2+4\lambda_\alpha^2)|V|^2.
\end{equation*}

For any $\varepsilon>0$, we can  choose
$\delta=\frac{\varepsilon}{64(4\alpha^2+\lambda_{\alpha}^2)}$. Since
$\alpha
    l_{\alpha}^2\to 0$ and  , then
there exists $\alpha_1>0$ such that when  $\alpha>\alpha_1$
\begin{equation*}
    \langle X_{\alpha}V,V\rangle-\langle
    Y_{\alpha}P_{\gamma}(l_{\alpha})V,P_{\gamma}(l_{\alpha})V\rangle\le
    \frac{\varepsilon}2 |V|^2.
\end{equation*}
For any $V\in T_{x_{\alpha}}M$, we have
\begin{equation*}\begin{split}
    D_x\varphi_{\alpha}(x_{\alpha},y_{\alpha})(V)&=  \alpha l_{\alpha}\langle-\gamma'(0),V\rangle+\lambda_{\alpha}\dist(\hat{x}_{\alpha},x_{\alpha})\langle\nabla \dist(\hat{x}_{\alpha},x_{\alpha}),V\rangle \\
D_y\varphi_{\alpha}(x_{\alpha},y_{\alpha})(P_{\alpha}(l_{\alpha})V)&=
 \alpha
l_{\alpha}\langle\gamma'(l_{\alpha}),P_{\alpha}(l_{\alpha})V\rangle+\lambda_{\alpha}\dist(\hat{x}_{\alpha},y_{\alpha})\langle\nabla
\dist(\hat{x}_{\alpha},y_{\alpha}),V\rangle,
                                                  \end{split}
\end{equation*}
then we have
\begin{equation*}\begin{split}
D_x\varphi_{\alpha}(x_{\alpha},y_{\alpha})(V)+&D_y\varphi_{\alpha}(x_{\alpha},y_{\alpha})(P_{\alpha}(l_{\alpha})V)=\\
&\lambda_{\alpha}\dist(\hat{x}_{\alpha},x_{\alpha})\langle\nabla
\dist(\hat{x}_{\alpha},x_{\alpha}),V\rangle+\lambda_{\alpha}\dist(\hat{x}_{\alpha},y_{\alpha})\langle\nabla
\dist(\hat{x}_{\alpha},y_{\alpha}),V\rangle,
 \end{split}
\end{equation*}
Since $\lim_{\alpha\to +\infty}\lambda_{\alpha}=0$ and
$\lim_{\alpha\to +\infty}\dist(\hat{x}_{\alpha},x_{\alpha})=0$, then
there exists $\alpha_2>0$ such that when $\alpha>\alpha_2$,
\begin{equation*}
    |D_x\varphi_{\alpha}(x_{\alpha},y_{\alpha})+D_y\varphi_{\alpha}(x_{\alpha},y_{\alpha})\circ P_{\alpha}(l_{\alpha})|<\varepsilon.
\end{equation*}
There exists  $\alpha_3>0$ such that when $\alpha>\alpha_3$,
$\omega(\sqrt{\frac{\mu_0}{\alpha}})<\varepsilon.$ Therefore
\begin{equation*}\begin{split}
                   u(x_{\alpha})-v(y_{\alpha})= & \sigma_{\alpha}+\frac{\alpha}2\dist(x_{\alpha},y_{\alpha})^2+\frac{\lambda_{\alpha}}2\dist(\hat{x}_{\alpha},x_{\alpha})^2+\frac{\lambda_{\alpha}}2\dist(\hat{x}_{\alpha},y_{\alpha})^2 \\
                     \ge&
                     \mu_0-\omega(\sqrt{\frac{\mu_0}{\alpha}})>\mu_0-\varepsilon.
                 \end{split}
    \end{equation*}
Finally we choose $\bar{\alpha}=\max\{\alpha_1,\alpha_2,\alpha_3\}$
such that all the inequalities in the theorem are satisfied. The
proof is complete.
\end{proof}

\begin{theorem}\label{thmYau}
Let $M$ be a complete Riemannian manifold with Ricci curvature
bounded below by a constant $-(n-1)\kappa^2$. Let $u
\in{\mathrm{USC}}(M)$, $v\in{\mathrm{LSC}}(M)$ be two functions
satisfying
 \begin{equation}\label{}
    \mu_0:=\sup_{x\in M}[u(x)-v(x)]< +\infty.
 \end{equation}
 Assume that $u$ and $v$ are bounded from above and below respectively and there exists a  function $\omega:\mathbb{R}%
_{+}\mapsto \mathbb{R}_{+}$ with $\omega(0)=\omega(0+)=0$ such that
\begin{equation}\label{}
    u(x)-u(y)\le \omega(\dist(x,y)).
\end{equation}
Then for each $\varepsilon>0$, there exist $x_{\varepsilon},
y_{\varepsilon}\in M$,
$(p_{\varepsilon},X_{\varepsilon})\in \bar{J}%
^{2,+}u(x_{\varepsilon}),\  \ (q_{\varepsilon},Y_{\varepsilon})\in
\bar{J} ^{2,-}v(y_{\varepsilon}),$ such that
\[
u(x_{\varepsilon})-v(y_{\varepsilon})\geq \mu_{0}-\varepsilon,\
\]
and such that
\[
\dist(x_{\varepsilon},y_{\varepsilon})<\varepsilon,\  \  \
|p_{\varepsilon }-q_{\varepsilon}\circ P_{\gamma}(l)|<\varepsilon,\
\mathrm{tr} X_{\varepsilon}\leq \mathrm{tr} Y_{\varepsilon
}+\varepsilon,\
\]
where $l=\dist(x_{\varepsilon},y_{\varepsilon})$ and $
P_{\gamma}(l)$ is the parallel transport along the shortest geodesic
connecting $x_{\varepsilon}$ and $y_{\varepsilon}$.

\end{theorem}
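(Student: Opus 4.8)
The plan is to mimic the proof of Theorem \ref{thmOmori} (the sectional-curvature version just given) almost verbatim, replacing the full Hessian comparison estimate of Theorem \ref{thmhess} with the trace (Laplacian-type) estimate of Theorem \ref{thmlaplace}, since under a Ricci lower bound we only control $\sum_i D^2\varphi(e_i,\cdot)$ rather than $D^2\varphi$ as a quadratic form. First I would carry out Part 1 exactly as before: replacing $u$ by $u-\mu_0+1$ we may assume $\mu_0>0$; for each $\alpha>0$ pick $\hat x_\alpha$ with $u(\hat x_\alpha)-v(\hat x_\alpha)+\omega(\sqrt{\mu_0/\alpha})\ge\mu_0$, set $\lambda_\alpha=-1/\ln\omega(\sqrt{\mu_0/\alpha})$, and maximize
\[
\sigma_\alpha=\sup_{x,y\in M}\Big[u(x)-v(y)-\tfrac{\alpha}{2}\dist(x,y)^2-\tfrac{\lambda_\alpha}{2}\dist(\hat x_\alpha,x)^2-\tfrac{\lambda_\alpha}{2}\dist(\hat x_\alpha,y)^2\Big].
\]
The same chain of inequalities using $u(x_\alpha)-u(y_\alpha)\le\omega(\dist(x_\alpha,y_\alpha))$ and the boundedness of $u,v$ yields $\frac{\alpha}{2}\dist(x_\alpha,y_\alpha)^2+\frac{\lambda_\alpha}{2}\dist(\hat x_\alpha,x_\alpha)^2+\frac{\lambda_\alpha}{2}\dist(\hat x_\alpha,y_\alpha)^2\le 2\omega(\sqrt{\mu_0/\alpha})\to 0$, hence $\dist(x_\alpha,y_\alpha),\dist(\hat x_\alpha,x_\alpha),\dist(\hat x_\alpha,y_\alpha)\to 0$ and $u(x_\alpha)-v(y_\alpha)\ge\mu_0-\omega(\sqrt{\mu_0/\alpha})$.

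Next, in Part 2, I would apply Theorem 3.2 of \cite{cil} to $\varphi_\alpha(x,y)=\frac{\alpha}{2}\dist(x,y)^2+\frac{\lambda_\alpha}{2}\dist(\hat x_\alpha,x)^2+\frac{\lambda_\alpha}{2}\dist(\hat x_\alpha,y)^2$, obtaining $(p_\alpha,X_\alpha)=(D_x\varphi_\alpha,X_\alpha)\in\bar J^{2,+}u(x_\alpha)$, $(q_\alpha,Y_\alpha)=(-D_y\varphi_\alpha,Y_\alpha)\in\bar J^{2,-}v(y_\alpha)$, with the matrix inequality \eqref{eqt12}. The gradient estimate $|p_\alpha-q_\alpha\circ P_\gamma(l_\alpha)|<\varepsilon$ for large $\alpha$ follows exactly as in Theorem \ref{thmOmori}, since $\lambda_\alpha\to 0$ and $\dist(\hat x_\alpha,x_\alpha)\to 0$ kill the penalty-gradient contributions while the $\frac{\alpha}{2}\dist^2$ part contributes $\alpha l_\alpha(-\gamma'(0))$ and $\alpha l_\alpha\gamma'(l_\alpha)$ which are exactly $P_\gamma(l_\alpha)$-related. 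The new point is the trace bound: writing $A_\alpha=P_\alpha+Q_\alpha$ with $P_\alpha=\frac{\alpha}{2}D^2\dist(x_\alpha,y_\alpha)^2$ and $Q_\alpha=\frac{\lambda_\alpha}{2}D^2[\dist(\hat x_\alpha,x_\alpha)^2+\dist(\hat x_\alpha,y_\alpha)^2]$, I take a normal basis $\{\gamma'(0),e_2,\dots,e_n\}$ at $x_\alpha$ and test the right inequality of \eqref{eqt12} against the vectors $(e_i,P_\gamma(l_\alpha)e_i)$ and also the $\gamma'$-direction $(\gamma'(0),\gamma'(l_\alpha))$ (on which $P_\alpha$ vanishes since $\dist^2$ has zero Hessian along the geodesic in that combined direction). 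Summing over $i$ and using Theorem \ref{thmlaplace} gives
\[
\operatorname{tr}X_\alpha-\operatorname{tr}\big(Y_\alpha\circ P_\gamma(l_\alpha)\big)\le \sum_{i=2}^{n}P_\alpha\big((e_i,P_\gamma(l_\alpha)e_i),(e_i,P_\gamma(l_\alpha)e_i)\big)+\|Q_\alpha\|_{\mathrm{tr}}+2\delta\big(\|P_\alpha\|^2+\|Q_\alpha\|^2\big)n,
\]
and by Theorem \ref{thmlaplace} the first sum is $\le 4(n-1)\kappa l_\alpha\tanh(\kappa l_\alpha/2)\le 2(n-1)\kappa^2\alpha l_\alpha^2\cdot(\text{const})$... more precisely $\le 2(n-1)\kappa^2\,\alpha\,l_\alpha^2$ up to the bounded factor, which $\to 0$ since $\alpha l_\alpha^2\le 4\omega(\sqrt{\mu_0/\alpha})/\!\!\!\phantom{.}$ — actually $\alpha l_\alpha^2\to 0$ from Part 1. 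Choosing $\delta=\varepsilon/(C(\alpha^2+\lambda_\alpha^2))$ and recalling $\lambda_\alpha\to 0$, each term on the right is $\le\varepsilon/3$ for $\alpha$ large, giving $\operatorname{tr}X_\alpha\le\operatorname{tr}Y_\alpha+\varepsilon$ (using that $\operatorname{tr}(Y_\alpha\circ P_\gamma(l_\alpha))=\operatorname{tr}Y_\alpha$ because $P_\gamma(l_\alpha)$ is an isometry).

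The main obstacle is the bookkeeping of the trace inequality: one must be careful that testing \eqref{eqt12} against the $n$ vectors $(e_i,P_\gamma(l_\alpha)e_i)$, $i=1,\dots,n$, produces exactly $\operatorname{tr}X_\alpha-\operatorname{tr}(Y_\alpha\circ P_\gamma(l_\alpha))$ on the left — this uses that parallel transport along $\gamma$ carries the orthonormal frame at $x_\alpha$ to one at $y_\alpha$, so $\sum_i\langle X_\alpha e_i,e_i\rangle=\operatorname{tr}X_\alpha$ and $\sum_i\langle Y_\alpha P_\gamma(l_\alpha)e_i,P_\gamma(l_\alpha)e_i\rangle=\operatorname{tr}Y_\alpha$ — and that the $i=1$ term (the $\gamma'$ direction) contributes nothing extra from $P_\alpha$ because $D^2(\frac12\dist^2)((\gamma'(0),\gamma'(l_\alpha)),(\gamma'(0),\gamma'(l_\alpha)))=0$ along a minimizing geodesic. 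The contributions of $Q_\alpha$ and of the $\delta A_\alpha^2$ term are controlled crudely by their operator norms (bounded by $2\lambda_\alpha$ and a multiple of $\alpha$ respectively, via the classical Hessian comparison theorem applied to $\dist(\hat x_\alpha,\cdot)$ on the small convex neighborhood), and all of these are made $<\varepsilon/3$ by the choices above. Finally set $\bar\alpha=\max\{\alpha_1,\alpha_2,\alpha_3\}$ where the three indices come from the gradient estimate, the trace estimate, and $\omega(\sqrt{\mu_0/\alpha})<\varepsilon$, and take $(x_\varepsilon,y_\varepsilon)=(x_{\bar\alpha},y_{\bar\alpha})$; this completes the proof.
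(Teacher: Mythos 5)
Your proposal is correct and follows essentially the same route as the paper: Part 1 is unchanged, and in Part 2 you test the matrix inequality from Theorem~3.2 of \cite{cil} against the vectors $(e_i,P_\gamma(l_\alpha)e_i)$ for an orthonormal frame at $x_\alpha$, invoke Theorem~\ref{thmlaplace} for the $\frac{\alpha}{2}D^2\dist^2$ contribution, bound the penalty and $\delta A_\alpha^2$ terms by their operator norms, and use that parallel transport is an isometry to identify $\operatorname{tr}(Y_\alpha\circ P_\gamma(l_\alpha))$ with $\operatorname{tr}Y_\alpha$. (There is a small slip in the displayed chain of bounds where the factor $\alpha/2$ relating $P_\alpha$ to $D^2\varphi$ is momentarily dropped before being reinstated, but the conclusion $\alpha l_\alpha^2\to 0$ and the choice of $\delta$ are what matter and are handled correctly.)
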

\begin{proof}
 Part 1 is the same as the above theorem. We start our
proof from the Part 2. Let $D_{\alpha}$ be the convex neighborhood
of $\hat{x}_{\alpha}$ chosen in the proof of the last theorem and
$-\kappa^2_{\alpha}$ is the lower bound of the sectional curvature
in $D_{\alpha}$. We can assume the diameter of $D_{\alpha}$ is small
such that both $\Vert P_{\alpha}\Vert$ and $\Vert Q_{\alpha}\Vert$
are bounded by $2\alpha$ and $2\lambda_{\alpha}$ respectively. By
Theorem 3.7, we have, for any orthonormal base $\{e_1, e_2, \cdots,
e_n\}$ at $x_{\alpha}$ with $e_1=\gamma'(0)$,
\begin{equation*}
    \sum_{i=1}^n\langle X_{\alpha}e_i, e_i\rangle-\sum_{i=1}^n\langle Y_{\alpha}P_{\gamma}(l_{\alpha})e_i,
    P_{\gamma}(l_{\alpha})e_i\rangle\le 2 (n-1)\kappa\alpha
    l_{\alpha}\frac{\sinh\frac{\kappa l_{\alpha}}2}{\cosh\frac{\kappa l_{\alpha}}2}+4(n-1)\delta(\alpha^2+\lambda_{\alpha}^2).
\end{equation*}
Here we may change the base if necessary since we are computing the
traces of $X_\alpha$ and $Y_\alpha$ respectively. Therefore we have
$\alpha_1>0,$ such that when $\alpha>\alpha_1$,
\begin{equation*}
    \mathrm{tr}X_{\alpha}- \mathrm{tr}Y_{\alpha}\le
    \frac{\varepsilon}2<\varepsilon.
\end{equation*}
The rest of the proof is the same as that of the preceding theorem.
\end{proof}
As a corollary we have the famous Yau's maximum principle.
\begin{corollary}Let $M$ be a complete Riemannian manifold with Ricci curvature
bounded below by a constant $-(n-1)\kappa^2$ and $f$ a $C^2$
 function on $M$ bounded from below. Then for any $\varepsilon>0$
 there exists a point $x_{\varepsilon}\in M$ such that
 \begin{equation*}
    f(x_{\varepsilon})\le \inf f+\varepsilon, |\nabla
    f|(x_{\varepsilon})<\varepsilon, \Delta f(x_{\varepsilon}) >-\varepsilon.
 \end{equation*}

\end{corollary}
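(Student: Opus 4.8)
The plan is to obtain the corollary as a direct specialization of Theorem \ref{thmYau}, exploiting the remark that one of the two functions should be taken constant. Concretely, I would apply Theorem \ref{thmYau} with $u\equiv 0$ and $v=f$. Then $u\in\USC(M)$ and $v\in\LSC(M)$ trivially, $u$ is bounded from above, $v=f$ is bounded from below by hypothesis, and $\mu_0=\sup_{x\in M}[u(x)-v(x)]=-\inf_M f<+\infty$. The modulus hypothesis $u(x)-u(y)\le\omega(\dist(x,y))$ holds with, e.g., $\omega(t)=t$, since $u(x)-u(y)\equiv 0$. Thus for the given $\varepsilon>0$, Theorem \ref{thmYau} produces points $x_\varepsilon,y_\varepsilon\in M$ and jets $(p_\varepsilon,X_\varepsilon)\in\bar J^{2,+}u(x_\varepsilon)$, $(q_\varepsilon,Y_\varepsilon)\in\bar J^{2,-}v(y_\varepsilon)$ with $u(x_\varepsilon)-v(y_\varepsilon)\ge\mu_0-\varepsilon$, $\dist(x_\varepsilon,y_\varepsilon)<\varepsilon$, $|p_\varepsilon-q_\varepsilon\circ P_\gamma(l)|<\varepsilon$ and $\mathrm{tr}\,X_\varepsilon\le\mathrm{tr}\,Y_\varepsilon+\varepsilon$.

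Next I would identify the two jet sets. Since $u\equiv 0$, inspecting \eqref{eq03} (equivalently its reformulation in Remark \ref{rk1}) shows that $(p,X)\in J^{2,+}u(x)$ forces $p=0$ and $X\ge 0$; as $u$ is continuous, passing to the closure preserves these closed conditions, so $\bar J^{2,+}u(x_\varepsilon)\subseteq\{0\}\times\{X\ge 0\}$, and in particular $p_\varepsilon=0$ and $\mathrm{tr}\,X_\varepsilon\ge 0$. Since $v=f$ is $C^2$, Remark \ref{rk1} together with \eqref{eq0.6} identifies $\bar J^{2,-}v(y)$ with $\{(\nabla f(y),Y):Y\le D^2f(y)\}$, exactly as in the Euclidean case treated in \cite{cil}; hence $q_\varepsilon=\nabla f(y_\varepsilon)$ and $\mathrm{tr}\,Y_\varepsilon\le\Delta f(y_\varepsilon)$.

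It then remains to read off the three inequalities at the point $x_{\varepsilon}^{\mathrm{Yau}}:=y_\varepsilon$. From $u(x_\varepsilon)-v(y_\varepsilon)\ge\mu_0-\varepsilon$, $u\equiv 0$ and $\mu_0=-\inf_M f$ we get $f(y_\varepsilon)\le\inf_M f+\varepsilon$. Since $p_\varepsilon=0$ and the parallel transport $P_\gamma(l)$ is a linear isometry, $|\nabla f(y_\varepsilon)|=|q_\varepsilon|=|q_\varepsilon\circ P_\gamma(l)|=|p_\varepsilon-q_\varepsilon\circ P_\gamma(l)|<\varepsilon$. Finally, combining $\mathrm{tr}\,X_\varepsilon\ge 0$, $\mathrm{tr}\,X_\varepsilon\le\mathrm{tr}\,Y_\varepsilon+\varepsilon$ and $\mathrm{tr}\,Y_\varepsilon\le\Delta f(y_\varepsilon)$ gives $\Delta f(y_\varepsilon)\ge\mathrm{tr}\,Y_\varepsilon\ge-\varepsilon$. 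This establishes all three conclusions.

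I do not expect any hard step: the corollary is essentially a translation of Theorem \ref{thmYau} once one makes the choice $u\equiv 0$, $v=f$. The only points requiring mild care are the explicit description of the limiting sub-/super-jet of a $C^2$ function on a manifold (so that $q_\varepsilon$ is literally $\nabla f(y_\varepsilon)$ and $\mathrm{tr}\,Y_\varepsilon\le\Delta f(y_\varepsilon)$), which is immediate from Remark \ref{rk1} and \eqref{eq0.6}, and the observation that one is forced to place $f$ in the role of $v$ rather than $u$: being merely $C^2$ with possibly unbounded gradient, $f$ need not satisfy a global modulus-of-continuity estimate, whereas the constant function trivially does.
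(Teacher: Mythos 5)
Your proof is correct and follows essentially the same route as the paper's, which simply says ``Let $u=\inf f$ and $v=f$, $\omega$ linear; it is straightforward to verify that all conditions in Theorem are satisfied.'' Taking $u\equiv 0$ instead of $u\equiv\inf f$ makes no difference since the theorem's conclusion is unchanged under adding a constant to $u$. You have simply filled in the verification that the paper leaves implicit: the computation that $\bar J^{2,+}0(x)=\{0\}\times\{X\ge 0\}$, the identification $\bar J^{2,-}f(y)=\{(\nabla f(y),Y):Y\le D^2f(y)\}$ for $C^2$ functions, and the isometry property of $P_\gamma(l)$. Your closing remark about why $f$ must be placed in the role of $v$ (because the modulus-of-continuity hypothesis is imposed only on $u$, and a general $C^2$ function bounded below need not admit one) is a genuinely useful observation that the paper does not make explicit. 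The only microscopic gap is that Theorem \ref{thmYau} as stated gives $\operatorname{tr}X_\varepsilon\le\operatorname{tr}Y_\varepsilon+\varepsilon$ non-strictly, hence $\Delta f(y_\varepsilon)\ge-\varepsilon$ rather than the strict $>-\varepsilon$ claimed by the corollary; this is repaired by invoking the theorem with $\varepsilon/2$ (or by noting that the theorem's proof in fact produces $\operatorname{tr}X_\alpha-\operatorname{tr}Y_\alpha\le\varepsilon/2<\varepsilon$, so the strict inequality is available).
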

\begin{proof}Let $u=\inf f$ and $v=f$. $\omega$ can be chosen to be a linear function.  It is straightforward to
verify that all conditions in Theorem are satisfied.
\end{proof}

\section{The maximum principle for parabolic PDE}\label{secPara}

\bigskip For any function $u:[0,T]\times M\rightarrow \mathbb{R}$, we define

\begin{definition}
We define
\[
P_{M}^{2,+}u(t_{0},x_{0})=\{(a,p,X)\in \mathbb{R}\times T_{x_{0}}^{\ast}%
M\times \mathcal{S}^2T_{x_{0}}^{\ast}M,\quad
\mathrm{satisfies}\quad(\ref{p-eq03}).\}
\]
where (\ref{p-eq03}) is%
\begin{equation}%
\begin{array}
[c]{l}%
u(t,x)\leq u(t_{0},x_{0})+a(t-t_{0})+(p(\exp_{x_{0}}^{-1}x)\\
\\
+\frac{1}{2}X(\exp_{x_{0}}^{-1}x,\exp_{x_{0}}^{-1}x)+o(|\exp_{x_{0}}%
^{-1}x|^{2}),\quad \mathrm{as}\quad x\rightarrow x_{0}, t\rightarrow
t_0.
\end{array}
\label{p-eq03}%
\end{equation}

We also set
$P_{M}^{2,-}u(t_{0},x_{0})=-P_{M}^{2,+}(-u)(t_{0},x_{0})$.

Correspondingly, we set
\[
\bar{P}_{M}^{2,+}u(t_{0},x_{0})=\left \{
\begin{split}
&  (a,p,X)\in \mathbb{R}\times T_{x_{0}}^{\ast}M\times \mathcal{S}
T_{x_{0}}^{\ast
}M,\text{ such that }(t_{0},x_{0},u(t_0, x_0),a,p,X)\text{ }\\
&  \text{is a limit point of }(t_{k},x_{k},u(t_k,
x_k),a_{k},p_{k},X_{k}), (a_{k},p_{k},X_{k})\in P_{M}
^{2,+}u(t_{k},x_{k})\text{.}
\end{split}
\right \}
\]
as well as $\bar{P}_{M}^{2,-}u(t_{0},x_{0})=-\bar{P}_{M}^{2,+}(-u)(t_{0}%
,x_{0})$.
\end{definition}

\begin{definition}
A viscosity subsolution of $\partial_{t}u+F=0$ on $(0,T)\times M$ is
a function $u\in \mathrm{USC}((0,T)\times M)$ such that
\[
a+F(x,u,p,X)\leq0\text{ }%
\]
for all $(t,x)\in(0,T)\times M$ and $(a,p,X)\in P_{M}^{2,+}u(t,x)$.
A viscosity supersolution of $\partial_{t}u+F=0$ on $(0,T)\times M$
is a function $u\in \mathrm{LSC}((0,T)\times M)$ such that
\[
a+F(x,u,p,X)\geq0\text{ }%
\]
for all $(t,x)\in(0,T)\times M$ and $(a,p,X)\in P_{M}^{2,-}u(t,x)$.
$u$ is a viscosity solution of $\partial_{t}u+F=0$ on $(0,T)\times
M$ if it is both a viscosity subsolution and a viscosity
supersolution of $\partial_{t}u+F=0$.
\end{definition}

\begin{lemma}
\label{Lm8.2}Let $u_{i}\in \mathrm{USC}((0,T)\times M_{i})$, $i=1,2$
and let
$\varphi \in C^{1,2}((0,T)\times M_{1}\times M_{2})$. Suppose that $\hat{t}%
\in(0,T)$ and $\hat{x}_{1}\in M_{1}$, $\hat{x}_{2}\in M_{2}$
satisfy:
\[
u_{1}(\hat{t},\hat{x}_{1})+u_{2}(\hat{t},\hat{x}_{2})-\varphi(\hat{t},\hat
{x}_{1},\hat{x}_{2})\geq
u_{1}(t,x_{1})+u_{2}(t,x_{2})-\varphi(t,x_{1},x_{2})
\]
for $t\in(0,T)$ and ${x}_{1}\in M_{1}$, ${x}_{2}\in M_{2}$. Assume
that there exists an $r>0$ such that for every $K>0$ there exists a
$C$ such that for
$i=1,2$: there are $b_{1},b_{2}%
\in \mathbb{R}$ and $X_{i}\in \mathcal{S}^2T_{\hat{x}_{i}}^{\ast}M_{i}$ such that%
\[%
\begin{array}
[c]{l}%
b_{i}\leq C\text{ whenever }(b_{i},q_{i},X_{i})\in P_{M_{i}}^{2,+}%
u(t,x_{i})\text{,}\\
\mathrm{d}(x_{i},\hat{x}_{i})+|t-\hat{t}|\leq r\  \text{and\ }|u_{i}%
(t,x_{i})|+|q_{i}|+\left \Vert X_{i}\right \Vert \leq K.
\end{array}
\]
Then for each $\varepsilon>0$, there exist $X_{i}\in \mathcal{S}
T_{\hat{x}}^{\ast
}M_{i}$ such that%
\[%
\begin{array}
[c]{clc}%
\mathrm{(i)} & (b_{i},D_{x_{i}}\varphi(\hat{t},\hat{x}_{1},\hat{x}_{2}%
),X_{i})\in \bar{P}_{M_{i}}^{2,+}u(\hat{t},\hat{x}_{i})\text{,} &
\text{for
}i=1,2\\
\mathrm{(ii)} & -\left(  \frac{1}{\varepsilon}+\Vert A\Vert \right)
I\leq \left(
\begin{array}
[c]{cc}%
X_{1} & 0\\
0 & X_{2}%
\end{array}
\right)  \leq A+\varepsilon A^{2}. & \\
\mathrm{(iii)} &
b_{1}+b_{2}=\partial_{t}\varphi(\hat{t},\hat{x}_{1},\hat {x}_{2}) &
\end{array}
\]
where
$A=D_{(x_{1},x_{2})}^{2}\varphi(\hat{t},\hat{x}_{1},\hat{x}_{2})$.
\end{lemma}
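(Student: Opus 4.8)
This is the manifold counterpart of Theorem~8.3 of \cite{cil}, and the plan is to mimic its proof, working on the product manifolds $N_i=(0,T)\times M_i$ (with the product metric) and reducing everything to the elliptic theorem of sums already proved (the lemma preceding Theorem~\ref{thmDiri}, i.e.\ the manifold version of Theorem~3.2 of \cite{cil}). Two preliminary observations make the transfer routine. First, after a standard mollification of $\varphi$ in $t$ from above — harmless, since the conclusion sees only $\partial_t\varphi$ and $D^2_{(x_1,x_2)}\varphi$, which converge, while the maximiser moves only slightly — we may assume $\varphi\in C^2$. Second, since the exponential map of $N_i$ is the product of the translation of the $(0,T)$ factor with $\exp^{M_i}$, a second-order jet on $N_i$ at $(\hat t,\hat x_i)$ splits unambiguously into a first-order datum in $t$ and a second-order jet on $M_i$, which is precisely what lets one read the parabolic jets $P^{2,\pm}_{M_i}$ off of ordinary $2$-jets on $N_i$. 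Note that, unlike the results of Sections \ref{secComp}--\ref{secPara} above, no curvature hypothesis and none of the Hessian comparison estimates of Section~\ref{secComp} enter here, because the time variable is flat and the spatial second differences are controlled by $\varphi\in C^2$ alone.

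Concretely, I would first localise (adding $\dist(x_i,\hat x_i)^2+|t-\hat t|^2$, and a cut-off if $M_i$ is noncompact) so that $u_1(z_1)+u_2(z_2)-\varphi(\tfrac{t_1+t_2}{2},x_1,x_2)$ has a strict maximum in a fixed product ball, and then, for $\delta>0$, apply the elliptic theorem of sums on $N_1\times N_2$ to
\[
u_1(z_1)+u_2(z_2)-\psi_\delta(z_1,z_2),\qquad \psi_\delta(z_1,z_2)=\varphi\!\Big(\tfrac{t_1+t_2}{2},x_1,x_2\Big)+\tfrac{1}{2\delta}\,|t_1-t_2|^2 ,
\]
whose maximiser $(z_1^\delta,z_2^\delta)$ satisfies $z_i^\delta\to(\hat t,\hat x_i)$ and $\tfrac1\delta|t_1^\delta-t_2^\delta|^2\to0$. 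Reading off the time components of the gradients gives $b_i^\delta=\tfrac12\varphi_t\pm\tfrac1\delta(t_1^\delta-t_2^\delta)$, so that $b_1^\delta+b_2^\delta=\varphi_t(\tfrac{t_1^\delta+t_2^\delta}{2},x_1^\delta,x_2^\delta)\to\partial_t\varphi(\hat t,\hat x)$; the space components give $D_{x_i}\varphi$; and restricting the block matrix inequality to purely spatial vectors — where the time penalty drops out and the quadratic $(D^2\psi_\delta)^2$-term contributes only bounded mixed $t$--$x$ and $x$--$x$ derivatives of $\varphi$ — yields $\mathrm{diag}(X_1^\delta,X_2^\delta)\le A^\delta+\varepsilon'(A^\delta)^2+o(1)$ with $A^\delta=D^2_{(x_1,x_2)}\varphi\to A$, so a suitable choice of the auxiliary parameter $\varepsilon'$ produces the upper half of (ii). Then I would let $\delta\to0$: the $X_i^\delta$ are bounded above uniformly, the $q_i^\delta=D_{x_i}\varphi$ and $u_i(z_i^\delta)$ are bounded, so the a priori bound $b_i\le C$ postulated in the hypothesis applies to the approximating genuine jets of $u_i$ and, together with $b_1^\delta+b_2^\delta$ being bounded, keeps $b_i^\delta$ two-sidedly bounded; a subsequence then converges, giving $(b_i,D_{x_i}\varphi(\hat t,\hat x),X_i)\in\bar P^{2,+}_{M_i}u_i(\hat t,\hat x_i)$ together with (ii) and (iii).

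The hard part is precisely this last limit, and it is where the hypothesis earns its keep. A black-box application of the elliptic theorem to $\psi_\delta$ returns a lower Hessian bound involving $\|D^2\psi_\delta\|$, which blows up as $\delta\to0$; hence to obtain the clean lower bound $-(\tfrac1\varepsilon+\|A\|)I\le\mathrm{diag}(X_1,X_2)$ in (ii) — and, with it, the uniform control of $\|X_i^\delta\|$ that the extraction of a convergent subsequence needs — one cannot simply quote the elliptic theorem but must re-run the sup-convolution argument behind it, regularising $u_i$ only in the spatial variables and handling the time direction separately through the a priori bound on $b_i$. Granting that (which is exactly the content of Theorem~8.3 of \cite{cil}, now with $\mathbb{R}^{n}$ replaced by $M_i$ and $\exp$ in place of affine charts), the three assertions follow; the only genuinely new bookkeeping is the observation already made, that on a product manifold a $2$-jet decomposes cleanly into its time- and space-parts.
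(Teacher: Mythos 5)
The paper in fact gives no proof of Lemma \ref{Lm8.2} at all: it is stated and then used, implicitly as the manifold analogue of Theorem 8.3 of \cite{cil}, exactly as the two elliptic lemmas in Section 4 were asserted with one-line attributions to Lemma 3.1 and Theorem 3.2 of \cite{cil}. Your reconstruction is therefore the only proof on the table, and it is the right one: lift to the products $(0,T)\times M_i$, use that their exponential maps factor as translation in $t$ times $\exp^{M_i}$ (so that a $2$-jet on $(0,T)\times M_i$ decomposes unambiguously into a scalar $b_i$ and a spatial jet, and Remark \ref{rk1} transfers everything to the flat tangent spaces), double the time variable with the $\frac{1}{2\delta}|t_1-t_2|^2$ penalty, and pass to the limit $\delta\to 0$ using the assumed one-sided bound on $b_i$ to extract a convergent subsequence; no curvature or Hessian-comparison input is needed since $\varphi\in C^{1,2}$ already supplies the spatial Hessian. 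You also correctly flag the one genuinely delicate point: a black-box application of the elliptic theorem of sums to $\psi_\delta$ gives a lower Hessian bound of size $\|D^2\psi_\delta\|\sim\delta^{-1}$, which is useless, so one must re-run the sup-convolution argument with regularisation in $x$ only and use the $b_i\le C$ hypothesis to tame the time direction — this is precisely how \cite{cil} proves their Theorem 8.3, and it carries over verbatim once the $2$-jets are read off through $\exp$. Your argument is correct and consistent with what the paper tacitly relies on.
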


Now we are in a position to assert our comparison theorem for
parabolic situation.

\begin{theorem}
Let $M$ be a compact Riemannian manifold with boundary $\partial M$,
$F\in C([0,+\infty)\times \mathcal{F}(M),\mathbb{R})$ be a proper
function such that there exists a function
$\omega:[0,\infty]\rightarrow \lbrack0,\infty]$ satisfying
(\ref{eq12.7}). Let $u\in \mathrm{USC}([0,T]\times \bar{M})$ and
$v\in {\mathrm{LSC}}([0,T]\times \bar{M})$ be a subsolution and
supersolution of
\[
\partial_{t}u+F=0
\]
respectively such that $u\leq v$ on $[0,T]\times \partial M$ and
$u|_{t=0}\leq v|_{t=0}$ on $M$. Then $u\leq v$ on $[0,T]\times
\bar{M}$.
\end{theorem}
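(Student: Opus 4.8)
The plan is to adapt the classical parabolic comparison argument (as in Section 8 of the Users' Guide \cite{cil}) to the Riemannian setting, using the Hessian comparison Theorem \ref{thmhess} to control the second-order terms exactly as was done in the proof of Theorem \ref{thmDiri}. First I would argue by contradiction: suppose $\sup_{[0,T]\times\bar M}(u-v)=:2\delta>0$. A standard device is to replace $u$ by $u-\eta/(T-t)$ for small $\eta>0$; this forces the supremum to be attained at some $\hat t<T$ (the penalty blows up as $t\to T$), makes $u$ a strict subsolution in the sense that $\partial_t u+F\le -\eta/T^2<0$ in the viscosity sense, and since $u\le v$ on the parabolic boundary $(\{0\}\times M)\cup([0,T]\times\partial M)$ the positive supremum is still achieved at an interior point for $\eta$ small. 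Then I would introduce the doubled function
\begin{equation*}
\Phi_\alpha(t,x,y)=u(t,x)-v(t,y)-\frac{\alpha}{2}\dist(x,y)^2
\end{equation*}
on $[0,T]\times\bar M\times\bar M$, pick maximizers $(t_\alpha,x_\alpha,y_\alpha)$, and invoke the analogue of Lemma \ref{Lm8.2} together with the compactness lemma (the parabolic version of the first lemma of Section 4): as $\alpha\to\infty$ one gets $\alpha\dist(x_\alpha,y_\alpha)^2\to0$, $t_\alpha\to$ some interior $t_0$, $x_\alpha,y_\alpha\to$ some common interior $x_0$, and $u(t_\alpha,x_\alpha)-v(t_\alpha,y_\alpha)\to 2\delta$ (with the $\eta$-modification in force).

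Next, working in a small convex neighborhood $D$ of $x_0$ where $\dist^2$ is smooth and the sectional curvature is bounded below by $-\kappa^2$, I would apply Lemma \ref{Lm8.2} to $\varphi(x,y)=\frac\alpha2\dist(x,y)^2$: this yields $b_1,b_2\in\rr$ with $b_1-b_2=\partial_t\varphi=0$ hence $b_1=b_2=:b$, together with $X_{1\alpha}\in\mathcal S^2T^*_{x_\alpha}M$, $X_{2\alpha}\in\mathcal S^2T^*_{y_\alpha}M$ such that $(b,-\alpha l_\alpha\iota(\gamma'_\alpha(0)),X_{1\alpha})\in\bar P^{2,+}u(t_\alpha,x_\alpha)$, $(b,-\alpha l_\alpha\iota(\gamma'_\alpha(l_\alpha)),X_{2\alpha})\in\bar P^{2,-}v(t_\alpha,y_\alpha)$, and the block matrix inequality holds. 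Using the decomposition $T_{x_\alpha}M\times T_{y_\alpha}M=(\rr\gamma'(0)\oplus\{\gamma'(0)\}^\perp)\times(\rr\gamma'(l_\alpha)\oplus\{\gamma'(l_\alpha)\}^\perp)$ and Theorem \ref{thmhess} to bound $A_\alpha=D^2(\frac\alpha2\dist^2)$ from above, exactly as in the proof of Theorem \ref{thmDiri}, I get $\|A_\alpha\|\le 3\alpha$ and, after choosing $\varepsilon=1/(\alpha(\cosh\kappa L(D)+1))$ in the matrix inequality, the key relation $X_{1\alpha}\le X_{2\alpha}\circ P_\gamma(l_\alpha)+4\alpha l_\alpha^2\kappa^2\,\mathrm{Id}$. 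Feeding the jets into the sub/supersolution inequalities gives
\begin{equation*}
b+F(x_\alpha,u(t_\alpha,x_\alpha),-\alpha l_\alpha\iota(\gamma'_\alpha(0)),X_{1\alpha})\le -\frac{\eta}{T^2}<0\le b+F(y_\alpha,v(t_\alpha,y_\alpha),-\alpha l_\alpha\iota(\gamma'_\alpha(l_\alpha)),X_{2\alpha}),
\end{equation*}
so the $b$'s cancel and, subtracting and inserting/removing intermediate $F$-values as in Theorem \ref{thmDiri}, properness in the $X$-slot absorbs the $4\alpha l_\alpha^2\kappa^2\,\mathrm{Id}$ error (which tends to $0$ since $\alpha l_\alpha^2\to0$), condition (H) via \eqref{eq12.7} bounds the $x$-versus-$y$ discrepancy by $\omega(\alpha\dist(x_\alpha,y_\alpha)^2+\dist(x_\alpha,y_\alpha))\to0$, and the monotonicity in $r$ contributes a term bounded below by $\beta\delta>0$; letting $\alpha\to\infty$ produces $0<\beta\delta+\eta/T^2\le 0$, a contradiction. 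Finally, letting $\eta\to0$ removes the modification and gives $u\le v$ on $[0,T]\times\bar M$.

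The main obstacle is establishing the parabolic versions of the two auxiliary lemmas in the exact form needed here — namely the compactness/localization lemma that pins the maximizers down to a single interior point in space-time as $\alpha\to\infty$ (so that one may legitimately work inside a fixed small convex coordinate neighborhood where $\dist^2$ is smooth and the curvature lower bound is available), and the parabolic theorem on sums (Lemma \ref{Lm8.2}) with the structural hypothesis on the $b_i$ verified; the verification of that hypothesis uses that $u$ is a subsolution of a proper equation with $\partial_t\varphi$ bounded, which is where the $\eta/(T-t)$ trick and the boundary conditions interact. Once these are in place, the curvature-handling is a verbatim repeat of the elliptic proof of Theorem \ref{thmDiri}, since time plays no role in the Hessian estimate; the only genuinely parabolic inputs are the boundary-in-time condition $u|_{t=0}\le v|_{t=0}$ and the $\hat t<T$ localization, both standard.
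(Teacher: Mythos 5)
Your proposal follows essentially the same route as the paper: the $\eta/(T-t)$ penalty to force $\hat t<T$ and create a strict subsolution margin, doubling variables with $\frac{\alpha}{2}\dist(x,y)^2$, the parabolic theorem on sums (Lemma \ref{Lm8.2}) with $b_1=b_2$, the Hessian bound from Theorem \ref{thmhess}, and condition (H) to drive the $x$-vs-$y$ discrepancy to zero. One small caution: you invoke a $\beta\delta>0$ term from (\ref{eq12.6}), but the parabolic theorem's hypotheses do not include that coercivity condition, and the paper's proof does not use it — the contradiction comes solely from the strict margin $\eta/T^2$ produced by the time penalty, so your $\beta\delta$ term should simply be dropped (properness in $r$ only gives the weak inequality $F(\cdot,u,\cdot)\ge F(\cdot,v,\cdot)$, which suffices).
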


\begin{proof}
We first observe that for each $\varepsilon>0$,
$\tilde{u}=u-\varepsilon /(T-t)$  satisfies
\[
\partial_{t}\tilde{u}+F(t, x,\tilde{u}+\varepsilon /(T-t),D\tilde{u},D^{2}\tilde{u})+\frac
{\varepsilon}{(T-t)^{2}}\leq0.
\]
We thus only need to prove $\tilde{u}\leq v$. In fact it suffices to
prove the
comparison theorem for the subsolution $u$ satisfying%
\begin{equation}%
\begin{array}
[c]{l}%
\mathrm{(i)\ }\partial_{t}u+F(t, x,u+\varepsilon /(T-t),Du,D^{2}u)+\frac{\varepsilon}{(T-t)^{2}}%
\leq0\\
\mathrm{(ii)\ }\lim_{t\uparrow T}u(t,x)=-\infty,\  \
\text{uniformly on }M.
\end{array}
\label{eq8.7}%
\end{equation}
We can see that $u$, $-v$ be bounded above. For the sake of a
contradiction we assume that
\begin{equation}
\delta:=\sup_{(t,x)\in \lbrack0,T)\times M}\{u(t,x)-v(t,x)\}>0. \label{eq8.9}%
\end{equation}
Then for $\alpha$ large,
\[
0<\delta \leq \mu_{\alpha}:=\sup_{t\in \lbrack0,T),\ x,y\in M}%
\{u(t,x)-v(t,y)-\frac{\alpha}{2}{\mathrm{d}}(x,y)^{2})\}<+\infty.
\]
There exists $(t_{\alpha},x_{\alpha},y_{\alpha})$ such that
\[
\lim_{\alpha \rightarrow
\infty}[\mu_{\alpha}-(u(t_{\alpha},x_{\alpha
})-v(t_{\alpha},y_{\alpha})-\frac{\alpha}{2}{\mathrm{d}}(x_{\alpha},y_{\alpha
})^{2})]=0.
\]
Then the following holds:
\[
\left \{
\begin{array}
[c]{ll}%
(i)\quad \lim_{\alpha \rightarrow
\infty}\alpha{\mathrm{d}}(x_{\alpha},y_{\alpha
})^{2}=0,\quad \text{and} & \\
(ii)\quad \lim_{\alpha \rightarrow \infty}\mu_{\alpha}=u(\hat{t},\hat{x}%
)-v(\hat{t},\hat{x})=\delta, & \hbox{}\\
\text{ where }(\hat{t},\hat{x})=\lim_{\alpha \rightarrow
\infty}(t_{\alpha },x_{\alpha}). & \hbox{}
\end{array}
\right.
\]
Let $(t_{\alpha},x_{\alpha},y_{\alpha})$ be a maximum point of
$u(t,x)-v(t,y)-(\alpha/2){\mathrm{d}}(x_{\alpha},y_{\alpha})^{2})$
over $[0,T)\times \bar{M}\times \bar{M}$ for $\alpha>0$. Such a
maximum exists in view of the assumed bounded above on $u$, $-v$,
the compactness of $\bar{\Omega}$, and (\ref{eq8.7})(ii). The
purpose of the term $(\alpha /2)\mathrm{d}(x,y)^{2}$ is as the
elliptic case. Set
\[
M_{\alpha}=u(t_{\alpha},x_{\alpha})-v(t_{\alpha},y_{\alpha})-\frac{\alpha}%
{2}d(x_{\alpha},y_{\alpha})^{2}.
\]
By (\ref{eq8.9}), $M_{\alpha}\geq \delta$. If $t_{\alpha}=0$, we have%
\[
0<\delta \leq M_{\alpha}\leq \sup_{x,y\in \bar{M}}[u(0,x)-u(0,y)-\frac{\alpha}%
{2}d(x,y)^{2}].
\]
But the right hand side tends to zero as $\alpha \rightarrow
\infty$, so when $\alpha$ is large we have $t_\alpha>0$. Similarly,
since $u\leq v$ on $[0,T)\times \partial M$ we have $x_\alpha$,
$y_\alpha\in \Omega$.

We now apply Lemma \ref{Lm8.2} at $(t_\alpha,x_\alpha,y_\alpha)$:
there are $a,b\in \mathbb{R}$ and $X\in
\mathcal{S}^2T_{x_{\alpha}}^{\ast}M$, $Y\in \mathcal{S}
T_{y_{\alpha}}^{\ast}M$ such that%
\begin{align*}
(a,D_{x}\varphi(t_{\alpha},x_{\alpha},y_{\alpha}),X)  &  \in \bar{P}_{M}%
^{2,+}u(t_{\alpha},x_{\alpha}),\  \
(b,-D_{y}\varphi(t_{\alpha},x_{\alpha
},y_{\alpha}),Y)\in \bar{P}_{M}^{2,-}v(t_{\alpha},y_{\alpha}),\\
a  &  =b
\end{align*}
and
\[
-\left(  \frac{1}{\varepsilon}+\Vert A\Vert \right)  I\leq \left(
\begin{array}
[c]{cc}%
X & 0\\
0 & -Y
\end{array}
\right)  \leq A+\varepsilon A^{2}.
\]
The relation
\begin{align*}
a+F(t_{\alpha},x_{\alpha},u(t_{\alpha}+\varepsilon
/(T-t_\alpha),x_{\alpha}),\alpha \iota(\gamma_{\alpha
}^{\prime}(0)),X)  &  \leq-c,\\
b+F(t_{\alpha},y_{\alpha},v(t_{\alpha},y_{\alpha}),\alpha
\iota(\gamma_{\alpha }^{\prime}(l_{\alpha})),Y)  &  \geq0.
\end{align*}
We thus have
\begin{align*}
c  &
\leq-a-F(t_\alpha,x_\alpha,u(t_\alpha,x_\alpha),-l_\alpha\alpha
\iota(\gamma_{\alpha}^{\prime}(0)),X_\alpha)\\
&  \leq F(t_\alpha,y_\alpha,v(t_\alpha,y_\alpha),-l_\alpha\alpha
\iota
(\gamma_{\alpha}^{\prime}(l_{\alpha})),Y_\alpha)-F(t_\alpha,x^{\alpha
},u(t_\alpha,x_\alpha),-l_\alpha\alpha
\iota(\gamma_{\alpha}^{\prime}(0)),X_{\alpha
})\\
&  \leq \omega(\alpha
d(x_\alpha,y_\alpha)^{2}+d(x_\alpha,y_\alpha)).
\end{align*}
Let $\alpha \rightarrow \infty$ we have arrived at a contradiction.
\end{proof}

\noindent    Shige Peng\newline School of Mathematics\newline
Shandong University\newline Jinan, Shandong 250100\newline China
\newline email: peng@sdu.edu.cn

\medskip \noindent    Detang Zhou\newline Insitituto de
Matem\'atica\newline Universidade Federal Fluminense- UFF\newline
Centro, Niter\'{o}i, RJ 24020-140\newline Brazil
\newline email: zhou@impa.br

\begin{thebibliography}{99}

\bibitem[AFS]{afs} D. Azagra, J. Ferrera, B. Sanz, {\rm Viscosity solutions to second order partial differential equations I, }
J. Differential Equations  245(2008) 307--336.

\bibitem[CY]{CY}  S.Y. Cheng and S.T. Yau, {\rm Differential equations on Riemannian manifolds and their geometric applications, Comm. Pure Appl. Math.
28 (1975) 333--354.}

\bibitem[CCG]{ccg}  B. Chow, S. Chu, D. Glickenstein, C. Guenther, J. Isenberg, T. Ivey, D. Knopf, p. Lu, F. Luo, L. Ni,  {\rm Ricci flow: techniques and applications, Part I: geometric aspects.
  Mathematical Surveys and Monographs, 135, AMS, Providence, RJ,
  2007.}
\bibitem[CIL]{cil} M.G. Crandall, H. Ishii, P.L. Loins,
{\rm User's guide to viscosity solutions of second order partial
differential equations,}, Bull. Amer Math. Soc. {\bf 27}(1992),
1--67.

\bibitem[HL]{HL} F. R. Harvey, and H.B. Lawson, {\rm Dirichlet duality and the nonlinear Dirichlet problem on Riemannian manifolds}, J. Diff. Geom.(to appear).




\bibitem[O]{O}H. Omori, {\rm Isometric immersions of Riemannian manifolds}, J. Math. Soc. Japan 19
(1967), 205-214. MR 35:6101

\bibitem[P1]{P1} S. Peng, {\rm Multi-dimensional {$G$}-{B}rownian motion and related
              stochastic calculus under {$G$}-expectation,}  Stochastic Process. Appl., 118(2008),
              2223--2253. MR 2474349

\bibitem[P2]{P2} S. Peng, {\rm Backward stochastic differential equations, nonlinear expectations and their applications, }in PROCEEDINGS OF THE INTERNATIONAL CONGRESS OF MATHEMATICIANS 2010 (ICM 2010)
 Vol. I: Plenary Lectures and Ceremonies.
 \bibitem[Y]{Y}   S.T. Yau, Harmonic functions on complete Riemannian manifolds, Comm. Pure Appl. Math.
28 (1975), 201-228. MR 55:4042

\end{thebibliography}
\end{document}